\def\@seccntDot{.}
\def\@seccntformat#1{\csname the#1\endcsname\@seccntDot\hskip 0.5em}
\renewcommand\section{\@startsection{section}{1}{\z@}%
{18\p@ \@plus 6\p@ \@minus 3\p@}%
{9\p@ \@plus 6\p@ \@minus 3\p@}%
{\large\bfseries\boldmath}}
\renewcommand\subsection{\@startsection{subsection}{2}{\z@}%
{12\p@ \@plus 6\p@ \@minus 3\p@}%
{3\p@ \@plus 6\p@ \@minus 3\p@}%
{\bfseries\boldmath}}
\renewcommand\subsubsection{\@startsection{subsubsection}{3}{\z@}%
{12\p@ \@plus 6\p@ \@minus 3\p@}%
{\p@}%
{\bfseries\boldmath}}
\theoremstyle{plain}
\newtheorem{theorem}{Theorem}[section]
\newtheorem{lemma}{Lemma}[section]
\newtheorem{corollary}{Corollary}[section]
\newtheorem{proposition}{Proposition}[section]
\newtheorem{definition}{Definition}[section]
\newtheorem{remark}{Remark}[section]
\newtheorem{example}{Example}[section]
\newtheorem{problem}{Problem}[section]
\theoremstyle{nonumberplain}
\newtheorem{proof}{\it Proof.}
\numberwithin{equation}{section}
\begin{document}

\title{The $\alpha$-normal labeling method for computing the $p$-spectral radii of uniform hypergraphs}
\author{Lele Liu 
\thanks{Department of Mathematics, Shanghai University, Shanghai 200444, P.R. China
({\tt ahhylau@gmail.com}). The work is done when this author visited the University 
of South Carolina during September 2017\,--\,March 2019 under the support of the fund 
from the China Scholarship Council (CSC No. 201706890045).}
\and Linyuan Lu
\thanks{Department of Mathematics, University of South Carolina, Columbia, SC 29208, 
USA ({\tt lu@math.sc.edu}). This author was supported in part by NSF grant DMS-1600811
and ONR grant N00014-17-1-2842.}}

\maketitle

\begin{abstract}
Let $G$ be an $r$-uniform hypergraph of order $n$. For each $p\geq 1$, the 
$p$-spectral radius $\lambda^{(p)}(G)$ is defined as
\[
\lambda^{(p)}(G):=\max_{|x_1|^p+\cdots+|x_n|^p=1}
r\sum_{\{i_1,\ldots,i_r\}\in E(G)}x_{i_1}\cdots x_{i_r}.
\]
The $p$-spectral radius was introduced by Keevash-Lenz-Mubayi, and subsequently 
studied by Nikiforov in 2014. The most extensively studied case is when $p=r$, and 
$\lambda^{(r)}(G)$ is called the spectral radius of $G$. The $\alpha$-normal 
labeling method, which was introduced by Lu and Man in 2014, is effective method 
for computing the spectral radii of uniform hypergraphs. It labels each corner 
of an edge by a positive number so that the sum of the corner labels at any vertex 
is $1$ while the product of all corner labels at any edge is $\alpha$. Since then, 
this method has been used by many researchers in studying $\lambda^{(r)}(G)$. In 
this paper, we extend Lu and Man's $\alpha$-normal labeling method to the $p$-spectral 
radii of uniform hypergraphs for $p\ne r$; and find some applications.\par\vspace{2mm}
  
\noindent{\bfseries Keywords:} Uniform hypergraph; $p$-spectral radius; 
$\alpha$-normal labeling; weighted incidence matrix. \par\vspace{2mm}

\noindent{\bfseries AMS classification:} 05C65; 15A18.
\end{abstract}

\section{Introduction}
\label{sec1}
Let $\mathbb{R}$ be the field of real numbers and $\mathbb{R}^n$ the $n$-dimensional 
real space. Given a vector $\bm{x}=(x_1,x_2,\ldots,x_n)^{\mathrm{T}}$ and a real 
number $p\geq 1$, we denote $||\bm{x}||_p:=(|x_1|^p+|x_2|^p+\cdots+|x_n|^p)^{1/p}$. 
We also denote $\mathbb{S}_p^{n-1}$ ($\mathbb{S}_{p,+}^{n-1}$, $\mathbb{S}_{p,++}^{n-1}$) 
the set of all (nonnegative, positive) real vectors $\bm{x}\in\mathbb{R}^n$ with 
$||\bm{x}||_p=1$. 

Let $G$ be an $r$-uniform hypergraph of order $n$, the polynomial form of $G$ is a 
multi-linear function $P_G(\bm{x}): \mathbb{R}^n\to\mathbb{R}$ defined for any vector 
$\bm{x}\in\mathbb{R}^n$ as
\[
P_G(\bm{x})=r\sum_{\{i_1,i_2,\ldots,i_r\}\in E(G)}x_{i_1}x_{i_2}\cdots x_{i_r}.
\]
For any real number $p\geq 1$, the {\em $p$-spectral radius} of $G$ is defined as
\[
\lambda^{(p)}(G):=\max_{||\bm{x}||_p=1}P_G(\bm{x}).
\]
If $\bm{x}\in\mathbb{S}^{n-1}_p$ is a vector such that $\lambda^{(p)}(G)=P_G(\bm{x})$, 
then $\bm{x}$ is called an {\em eigenvector} to $\lambda^{(p)}(G)$. Note that $P_G(\bm{x})$ 
can always reach its maximum at some nonnegative vectors. By Lagrange's method, we have 
the {\em eigenequation} for $\lambda^{(p)}(G)$ and $\bm{x}\in\mathbb{S}_{p,+}^{n-1}$ as follows:
\begin{equation}\label{eq:Eigenequation}
\sum_{\{i,i_2,\ldots,i_r\}\in E(G)}x_{i_2}\cdots x_{i_r}=\lambda^{(p)}(G)x_i^{p-1}
~~\text{for}~x_i\neq 0.
\end{equation}

The $p$-spectral radius has been introduced by Keevash, Lenz and Mubayi 
\cite{Keevash2014} and subsequently studied by Nikiforov 
\cite{Nikiforov2014:Analytic Methods,Nikiforov2014:Extremal Problems,Kang2014}
and Chang et al. \cite{ChangDing2017}. Note that the $p$-spectral radius 
$\lambda^{(p)}(G)$ shows remarkable connections with some hypergraph invariants. 
For instance, $\lambda^{(1)}(G)/r$ is equal to the Lagrangian of $G$, which 
has been investigated by Talbot \cite{Talbot2002}, $\lambda^{(r)}(G)$ is 
the usual spectral radius introduced by Cooper and Dutle \cite{Cooper2012}, 
and $\lambda^{(\infty)}(G)/r$ is the number of edges of $G$ (see 
\cite{Nikiforov2014:Analytic Methods}). It should be announced that we 
modified the definition of $p$-spectral radius by removing a constant 
factor $(r-1)!$ from \cite{Keevash2014}, so that the $p$-spectral radius 
is the same as the one in \cite{Cooper2012} when $p=r$. This is not essential 
and does not affect the results at all.

Recall that a {\em weighted incidence matrix} $B=(B(v,e))$ of a hypergraph 
$G$ is a $|V|\times |E|$ matrix such that for any vertex $v$ and any edge $e$, 
the entry $B(v,e)>0$ if $v\in e$ and $B(v,e)=0$ if $v\notin e$. In 
\cite{LuMan2016:Small Spectral Radius}, Lu and Man introduced the $\alpha$-normal 
labeling method for computing the spectral radii of uniform hypergraphs as follows: 
\begin{theorem}[\cite{LuMan2016:Small Spectral Radius}]
\label{thm:uniform hypergraph normal label}
Let $G$ be a connected $r$-uniform hypergraph. Then the spectral radius of $G$ is
$\rho(G)$ if and only if there is a weighted incidence matrix $B$ satisfying
\begin{enumerate}
\item[$(1)$] $\sum_{e:\,v\in e}B(v,e)=1$, for any $v\in V(G)$;
\item[$(2)$] $\prod_{v\in e}B(v,e)=\alpha=\rho(G)^{-r}$, for any $e\in E(G)$;
\item[$(3)$] $\prod_{i=1}^{\ell}\frac{B(v_{i-1},e_i)}{B(v_i,e_i)}=1$, for any cycle
$v_0e_1v_1e_2\cdots v_{\ell-1}e_{\ell}(v_{\ell}=v_0)$.
\end{enumerate}
\end{theorem}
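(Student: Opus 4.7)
The plan is to use the Perron--Frobenius theory for connected nonnegative tensors, which supplies, for any connected $r$-uniform hypergraph $G$, a positive eigenvector $\bm{x}>\bm{0}$ associated with the spectral radius $\rho(G)=\lambda^{(r)}(G)$, unique up to scaling. The proof is then a translation between such an eigenvector and a labeling $B$.

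For the ``only if'' direction, I would take a positive eigenvector $\bm{x}$ satisfying the eigenequation $\sum_{e\ni v}\prod_{u\in e\setminus\{v\}}x_u=\rho(G)\,x_v^{r-1}$, and define
\[
B(v,e):=\frac{\prod_{u\in e\setminus\{v\}}x_u}{\rho(G)\,x_v^{r-1}}\qquad(v\in e).
\]
Dividing the eigenequation by $\rho(G)x_v^{r-1}$ immediately yields $(1)$. Property $(2)$ is a bookkeeping identity: in $\prod_{v\in e}\prod_{u\in e\setminus v}x_u$ each $x_u$ appears exactly $r-1$ times, giving $\prod_{u\in e}x_u^{r-1}$, which cancels the denominator and leaves $\rho(G)^{-r}=\alpha$. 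For $(3)$, a direct simplification gives $B(v_{i-1},e_i)/B(v_i,e_i)=(x_{v_i}/x_{v_{i-1}})^r$, so the product around any cycle telescopes to $1$.

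For the ``if'' direction, I would reverse-engineer an eigenvector from $B$. The identity above suggests fixing a root $v_0$ with $x_{v_0}:=1$ and defining, for any vertex $v$,
\[
x_v^r:=\prod_{i=1}^{\ell}\frac{B(v_{i-1},e_i)}{B(v_i,e_i)}
\]
along any walk $v_0e_1v_1\cdots e_\ell v_\ell=v$ in the incidence bipartite graph. Condition $(3)$ ensures this value is path-independent, so $\bm{x}>\bm{0}$ is well defined (and may be normalized to $\|\bm{x}\|_r=1$). Setting $\rho:=\alpha^{-1/r}$ and $\tilde B(v,e):=\prod_{u\in e\setminus v}x_u/(\rho\,x_v^{r-1})$, one checks that $\tilde B$ and $B$ share the same within-edge ratios by construction, and by $(2)$ both have edge-product $\alpha$; comparing $r$ positive numbers with equal pairwise ratios and equal product forces $\tilde B\equiv B$. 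Then $(1)$ for $\tilde B$ is exactly the eigenequation, so $\bm{x}$ is a positive eigenvector of $G$ with eigenvalue $\rho$, and Perron--Frobenius for irreducible nonnegative tensors (applicable since $G$ is connected) forces $\rho=\rho(G)$.

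The main obstacle is the well-definedness of $\bm{x}$ in the ``if'' direction: I must argue that two distinct $v_0\to v$ walks yield the same value of $x_v^r$, which by concatenation reduces to showing the ratio product is $1$ on every closed walk. Condition $(3)$ supplies this for simple cycles, and I would extend to arbitrary closed walks in the incidence bipartite graph by a standard cycle-decomposition argument. The concluding Perron--Frobenius step, though routine, is also indispensable, because $(1)$--$(3)$ only guarantee the existence of \emph{some} positive eigenvector, which without uniqueness could in principle correspond to a strictly smaller eigenvalue.
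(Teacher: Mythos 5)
Your argument is correct, and your ``only if'' direction is exactly the standard construction (the same one this paper uses in the forward direction of its $p>r$ analogue, \autoref{lem:iff}, specialized to $p=r$, where the edge weights $w(e)$ disappear and the consistency condition becomes the cycle condition $(3)$). Where you genuinely diverge is the converse. The paper quotes this theorem from Lu--Man, and both their proof and this paper's own method (see the backward direction of \autoref{lem:iff} and \autoref{lem:subnormal}) are variational: conditions $(1)$--$(2)$ alone give, via AM--GM (H\"older for $p\ne r$), the bound $P_G(\bm{x})=r\sum_e\prod_{v\in e}x_v=\alpha^{-1/r}\,r\sum_e\prod_{v\in e}(B(v,e))^{1/r}x_v\le\alpha^{-1/r}\sum_v x_v^r\sum_{e:\,v\in e}B(v,e)\le\alpha^{-1/r}$, so $\rho(G)\le\alpha^{-1/r}$; the cycle/consistency condition $(3)$ is then used only to manufacture a vector with $B(v,e)x_v^r$ constant on each edge, attaining equality. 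You instead read $(3)$ as a holonomy condition, integrate it along walks to a positive vector $\bm{x}$ (your identification $\tilde B\equiv B$ via equal within-edge ratios and equal edge products is fine, and your closed-walk reduction is routine since backtracking steps cancel), and then conclude by the tensor Perron--Frobenius/Collatz--Wielandt fact that a positive eigenpair of the adjacency tensor of a connected hypergraph must have eigenvalue $\rho(G)$. The trade-off: your route makes the reconstruction of the Perron vector from the labeling explicit, but it outsources the decisive inequality to nontrivial tensor PF theory (Yang--Yang type results, with connectivity giving weak irreducibility), whereas the AM--GM route is elementary, self-contained, and is what lets the method degrade gracefully to the sub-/supernormal one-sided bounds and to $p\ne r$, which is the engine of the rest of the paper. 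If you keep your version, state precisely which PF statement you invoke (a positive $H$-eigenpair of a nonnegative weakly irreducible tensor has eigenvalue equal to the spectral radius) and note that the variational $\rho(G)=\lambda^{(r)}(G)$ coincides with that tensor spectral radius for nonnegative symmetric tensors.
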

The weighted incidence matrix $B(v,e)$ can be viewed as a labeling on the corners 
of edges. This $\alpha$-normal labeling method has been proved by many researches 
\cite{KangLiu2016,LiuKang2016,LiShao2016, BaiLu2017,OuyangQi2017,YuanSi2017,ZhangKang2017,XiaoWang2018} 
to be a simple and effective method in the study of spectral radii of uniform 
hypergraphs. In this paper, we extend Lu and Man's method to the $p$-spectral 
radii of uniform hypergraphs for $p\not=r$. The $\alpha$-normal labeling method 
(for $p\not=r$) is very different from the CSRH algorithm developed by Chang-Ding-Qi-Yan 
\cite{ChangDing2017} to compute the $p$-spectrum radii of uniform hypergraphs numerically.
Although our method can also be used to compute the $p$-spectral radius of a hypergraph 
$G$ when $G$ is highly symmetric or hypertree-like, this is not our main purpose. The 
goal of this paper is to provide a powerful tool to analyze the properties of the 
$p$-spectral radii of hypergraphs in the same way for the special case $p=r$. We 
illustrate this by giving several interesting applications.  We discover
two new monotonic functions characterizing the growth rate of $\lambda^{(p)}(G)$
with respective to the maximum degree and the minimum degrees
(\autoref{monotonic function}). We also prove two convex results (\autoref{convex} 
and \autoref{convex2}) of the $p$-spectral radius. We obtain a tight upper bound 
using degrees (\autoref{degree_bound}). We determine the $p$-spectral radius of 
$G_1\ast G_2$ (\autoref{thm:G1*G2}) and $G_1\times G_2$ (\autoref{product}). We 
study the $p$-spectral radius of the extension of a hypergraph (\autoref{extension}). 

The paper is organized in the following way: in Section 2, we develop the $\alpha$-normal 
labeling method for $p>r$. In Section 3, we present many applications. The $\alpha$-normal 
labeling method for $p<r$ is handled the last section.

\section{The $\alpha$-normal labeling method for $p>r$}

In this section, we will establish a relation between $\lambda^{(p)}(G)$ and its 
weighted incidence matrix of a uniform hypergraph $G$. For concepts on hypergraphs 
we refer the reader to \cite{Bretto2013}. Before continuing, we need the following 
Perron--Frobenius theorem for uniform hypergraphs.

\begin{theorem}[\cite{Nikiforov2014:Analytic Methods}]\label{thm:p>r}
Let $G$ be an $r$-uniform hypergraph with no isolated vertices. If $p>r$, then there 
exists a unique positive eigenvector to $\lambda^{(p)}(G)$.
\end{theorem}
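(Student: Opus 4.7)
The plan is to prove existence of a nonnegative maximizer, upgrade it to strictly positive, and then establish uniqueness, with the hypothesis $p>r$ entering crucially only in the last step.

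For existence, $P_G$ is continuous on the compact sphere $\mathbb{S}_p^{n-1}$ and thus attains its maximum at some $\bm{x}^*$; since $P_G$ has nonnegative coefficients we have $P_G(|\bm{x}^*|)\geq P_G(\bm{x}^*)$, so we may take $\bm{x}^*\in\mathbb{S}_{p,+}^{n-1}$, and the eigenequation~\eqref{eq:Eigenequation} follows from Lagrange multipliers.

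For strict positivity, I would argue by contradiction. Let $Z=\{i:x_i^*=0\}$ and $N=V(G)\setminus Z$, and suppose $Z\neq\emptyset$; the eigenequation at any $j\in Z$ forces every edge containing $j$ to have at least one further vertex in $Z$. Combined with the absence of isolated vertices (working inside a single connected component), one picks a straddling edge $e$ with $k:=|e\cap Z|$ satisfying $2\leq k\leq r-1$, and perturbs $\bm{x}^*$ by setting $x_j:=\epsilon$ for $j\in e\cap Z$ and rescaling the remaining coordinates by $(1-k\epsilon^p)^{1/p}$ to stay on $\mathbb{S}_p^{n-1}$. A Taylor expansion then shows the leading increase in $P_G$ is of order $\epsilon^k\prod_{i\in e\cap N}x_i^*>0$ while the rescaling cost is of order $\epsilon^p$; since $k\leq r-1<p$, choosing $\epsilon$ small enough contradicts the maximality of $\bm{x}^*$.

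For uniqueness, let $\bm{x},\bm{y}\in\mathbb{S}_{p,++}^{n-1}$ be two positive eigenvectors to $\lambda:=\lambda^{(p)}(G)$ and set $t:=\min_i y_i/x_i$ and $s:=\max_i y_i/x_i$. At an index $j$ realizing $y_j=tx_j$, the eigenequation combined with the pointwise bound $y_i\geq tx_i$ gives
\[
\lambda t^{p-1}x_j^{p-1}=\lambda y_j^{p-1}=\sum_{\{j,i_2,\ldots,i_r\}\in E(G)}y_{i_2}\cdots y_{i_r}\geq t^{r-1}\sum_{\{j,i_2,\ldots,i_r\}\in E(G)}x_{i_2}\cdots x_{i_r}=t^{r-1}\lambda x_j^{p-1},
\]
whence $t^{p-r}\geq 1$ and thus $t\geq 1$ because $p>r$. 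The mirror argument at an index realizing $s$ yields $s^{p-r}\leq 1$, so $s\leq 1$. Combined with $t\leq s$, this forces $t=s=1$ and hence $\bm{x}=\bm{y}$.

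The uniqueness step is the clean one: the hypothesis $p>r$ enters only through the single exponent comparison $t^{p-r}\geq 1\Rightarrow t\geq 1$, and the direction would flip for $p<r$, foreshadowing why the $p<r$ case will require different ideas in the last section. The main obstacle is therefore strict positivity: the simple single-coordinate perturbation fails precisely because the eigenequation already ``saturates'' $Z$ with $|e\cap Z|\geq 2$, forcing one to perturb all of $e\cap Z$ simultaneously and then to carefully verify that the polynomial $\epsilon^k$ gain beats the $\epsilon^p$ rescaling cost using the easy slack $k\leq r-1<p$.
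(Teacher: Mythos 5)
The paper does not prove this statement (it is quoted from Nikiforov), so I am judging your argument on its own. The existence step and the uniqueness step are fine: the min/max ratio argument with $t=\min_i y_i/x_i$, $s=\max_i y_i/x_i$ is correct, both maximizers do satisfy the eigenequation \eqref{eq:Eigenequation} with the same $\lambda^{(p)}(G)>0$, and the conclusion $t\geq 1\geq s$ indeed forces $\bm{x}=\bm{y}$; this is exactly where $p>r$ should enter for uniqueness.

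The genuine gap is in the strict positivity step. Your argument hinges on picking a \emph{straddling} edge $e$ with $2\leq |e\cap Z|\leq r-1$, but such an edge need not exist: the theorem does not assume connectivity, and if $Z$ is the vertex set of one or more entire components of $G$, every edge lies wholly in $Z$ or wholly in $N$, so there is nothing to straddle. The parenthetical ``working inside a single connected component'' does not repair this — for $p>r$ the maximizer must be positive on \emph{every} component (this is precisely the content of the disconnected-graph proposition later in the paper), so restricting to one component assumes the hardest part of what is to be proved. The case $r=2$ exposes the same problem, since $2\leq k\leq r-1$ is then vacuous. The repair is easy and in fact simplifies your argument: for any $j\in Z$ take any edge $e\ni j$ (it exists because $G$ has no isolated vertices), set $x_v:=\epsilon$ for all $v\in e\cap Z$ with $k:=|e\cap Z|\leq r$, and rescale; the gain is at least $r\epsilon^{k}\prod_{i\in e\cap N}x_i^*>0$ (empty product equal to $1$ when $k=r$), while the rescaling cost is $O(\epsilon^{p})$, and $k\leq r<p$ gives the contradiction. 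This dispenses with connectivity and with the claim $k\geq 2$ altogether, and it places the hypothesis $p>r$ exactly where it is needed (an edge entirely inside $Z$ gives gain $\epsilon^{r}$ versus cost $\epsilon^{p}$). A minor additional point: the eigenequation \eqref{eq:Eigenequation} is stated only at coordinates with $x_i\neq 0$, so your assertion that it ``forces'' every edge through $j\in Z$ to re-enter $Z$ needs either the KKT inequality at zero coordinates or a one-coordinate perturbation of the same kind; with the unified perturbation above this intermediate claim is not needed at all.
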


Given an $r$-uniform hypergraph $G$, for each edge $e\in E(G)$, we put a weight $w(e)>0$ 
on $e$. In the following, we always assume that $p>r$.

\begin{definition}\label{def:consistent normal}
An $r$-uniform hypergraph $G$ is called {\em $\alpha$-normal} if there exist a weighted 
incidence matrix $B$ and weights $\{w(e)\}$ satisfying
\begin{enumerate}
\item[(1)] $\displaystyle\sum_{e\in E(G)}w(e)=1$; 

\item[(2)] $\displaystyle\sum_{e:\,v\in e}B(v,e)=1$, for any $v\in V(G)$; 
 
\item[(3)] $\displaystyle w(e)^{p-r}\cdot\prod_{v\in e}B(v,e)=\alpha$, 
for any $e\in E(G)$.
\end{enumerate}
Moreover, the weighted incidence matrix $B$ and weights $\{w(e)\}$ are called 
{\em consistent} if for any $v\in V(G)$ and $v\in e_i$, $i=1,2,\ldots,d$,
\[
\frac{w(e_1)}{B(v,e_1)}=\frac{w(e_2)}{B(v,e_2)}=\cdots=\frac{w(e_d)}{B(v,e_d)}.
\]
\end{definition}

\begin{lemma}\label{lem:iff}
Let $G$ be an $r$-uniform hypergraph with no isolated vertices. Then the $p$-spectral 
radius of $G$ is $\lambda^{(p)}(G)$ if and only if $G$ is consistently $\alpha$-normal 
with $\alpha=r^{p-r}/(\lambda^{(p)}(G))^p$.
\end{lemma}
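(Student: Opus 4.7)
The plan is to exhibit an explicit correspondence between positive eigenvectors in $\mathbb{S}_{p,++}^{n-1}$ and consistent $\alpha$-normal labelings with $\alpha=r^{p-r}/\lambda^p$, bridged by the substitution $r\,x_v^p=\sum_{e\ni v}w(e)$. For the forward direction, I would set $\lambda:=\lambda^{(p)}(G)$, invoke \autoref{thm:p>r} to pick the unique positive eigenvector $\bm{x}\in\mathbb{S}_{p,++}^{n-1}$, and note that multiplying the eigenequation by $x_v$ and summing over $v$ gives $P_G(\bm{x})=\lambda$. I would then define
\[
w(e):=\frac{r}{\lambda}\prod_{v\in e}x_v,\qquad B(v,e):=\frac{\prod_{u\in e\setminus\{v\}}x_u}{\lambda\,x_v^{p-1}}.
\]
Condition~(1) reduces to $\sum_e w(e)=P_G(\bm{x})/\lambda=1$; condition~(2) is exactly the eigenequation divided by $\lambda x_v^{p-1}$; condition~(3) follows from the bookkeeping $\prod_{v\in e}B(v,e)=\prod_{u\in e}x_u^{r-p}/\lambda^r$ combined with $w(e)^{p-r}=r^{p-r}\prod_{u\in e}x_u^{p-r}/\lambda^{p-r}$; and consistency holds because $w(e)/B(v,e)=r\,x_v^p$ depends only on $v$.

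For the backward direction, suppose $G$ admits a consistent $\alpha$-normal pair $(B,\{w(e)\})$ with $\alpha=r^{p-r}/\lambda^p$. Consistency forces $B(v,e)=w(e)/c_v$ for some $c_v$ independent of $e\ni v$, and condition~(2) pins down $c_v=\sum_{e\ni v}w(e)$, positive thanks to the absence of isolated vertices. Setting $x_v:=(c_v/r)^{1/p}$, swapping the order of summation turns condition~(1) into $\|\bm{x}\|_p^p=(1/r)\sum_v c_v=\sum_e w(e)=1$. Rewriting condition~(3) as $w(e)^p=\alpha\prod_{v\in e}c_v$ and solving yields $w(e)=(r/\lambda)\prod_{v\in e}x_v$, whence
\[
\sum_{e\ni v}\prod_{u\in e\setminus\{v\}}x_u=\frac{\lambda}{r\,x_v}\sum_{e\ni v}w(e)=\frac{\lambda\,c_v}{r\,x_v}=\lambda\,x_v^{p-1}.
\]
Thus $\bm{x}$ is a positive eigenvector of $G$ with eigenvalue $\lambda$, and $P_G(\bm{x})=\lambda$ yields $\lambda^{(p)}(G)\geq\lambda$.

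The main obstacle is promoting this last inequality to equality. I would appeal to the full Perron--Frobenius statement behind \autoref{thm:p>r}: when $p>r$ and $G$ has no isolated vertices, any positive eigenvector must correspond to the $p$-spectral radius, not merely that the Perron eigenvector for $\lambda^{(p)}(G)$ is unique. Apart from this subtlety, the entire argument is algebraic bookkeeping: the substitution $x_v=(c_v/r)^{1/p}$ carries all the weight, and the exponents in condition~(3) have been arranged precisely so that the powers of $r$ and $\lambda$ balance to $\alpha=r^{p-r}/\lambda^p$.
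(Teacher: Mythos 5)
Your forward direction coincides with the paper's proof: the same $B(v,e)=\prod_{u\in e}x_u/(\lambda x_v^p)$ and $w(e)=r\prod_{u\in e}x_u/\lambda$, the same verifications, and the same consistency constant $w(e)/B(v,e)=rx_v^p$. In the backward direction you also recover exactly the paper's vector, $x_v=(c_v/r)^{1/p}$ with $c_v=\sum_{e\ni v}w(e)$, and correctly check $\|\bm{x}\|_p=1$ and the eigenequation with value $\lambda$, which yields $\lambda^{(p)}(G)\ge P_G(\bm{x})=\lambda$. The problem is the final step. To get $\lambda^{(p)}(G)\le\lambda$ you invoke ``any positive eigenvector must correspond to the $p$-spectral radius,'' but that is strictly more than \autoref{thm:p>r} as stated: in this paper an eigenvector to $\lambda^{(p)}(G)$ is \emph{by definition} a unit maximizer of $P_G$, so the quoted theorem only asserts uniqueness of the positive maximizer and says nothing about positive solutions of the Lagrange system \eqref{eq:Eigenequation} with some other critical value $\lambda$. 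Your $\bm{x}$ is a priori only such a critical point, so the decisive inequality is left resting on a result not established in (or literally implied by) what you cite.

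The claim you need is in fact true for $p>r$, and it can be proved by essentially the computation the paper uses instead: for any $\bm{z}\in\mathbb{S}_{p,+}^{n-1}$ write $z_v=c_vx_v$, apply AM--GM to $\prod_{v\in e}c_v$ and then the eigenequation to get $P_G(\bm{z})\le\lambda\sum_v z_v^{\,r}x_v^{\,p-r}$, and finish with H\"older with exponents $p/r$ and $p/(p-r)$ to conclude $P_G(\bm{z})\le\lambda$. The paper's own backward direction avoids the issue altogether by running this H\"older/AM--GM chain directly in terms of the labeling: conditions $\sum_e w(e)=1$ and $\sum_{e\ni v}B(v,e)=1$ give $\lambda^{(p)}(G)\le r^{1-r/p}\alpha^{-1/p}$ for \emph{every} nonnegative unit vector (this is also what powers \autoref{lem:subnormal}), and only afterwards is the consistent labeling used to build $\bm{x}^*$ attaining the bound. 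So either insert the short argument above (or an explicit citation to the corresponding Perron--Frobenius-type statement of Nikiforov for $p>r$), or restructure as the paper does; as written, your proof of the backward implication has a genuine gap at $\lambda^{(p)}(G)\le\lambda$.
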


\begin{proof}
($\Longrightarrow$) By \autoref{thm:p>r}, let 
$\bm{x}=(x_1,x_2,\ldots,x_n)^{\mathrm{T}}\in\mathbb{S}^{n-1}_{p,++}$ 
be an eigenvector to $\lambda^{(p)}(G)$. Define a weighted incidence 
matrix $B$ and $\{w(e)\}$ as follows:
\begin{align*}
B(v,e) & =\begin{dcases}
\displaystyle\frac{\prod_{u\in e}x_u}{\lambda^{(p)}(G) x_v^p}, & \text{if}~v\in e,\\
0, & \text{otherwise},
\end{dcases}\\[2mm]
w(e) & =\frac{r\prod_{u\in e}x_u}{\lambda^{(p)}(G)}.
\end{align*}
For any $v\in V(G)$, using the eigenequation \eqref{eq:Eigenequation} gives
\[
\sum_{e:\,v\in e}B(v,e)=
\frac{\sum_{e:\,v\in e}\prod_{u\in e}x_u}{\lambda^{(p)}(G) x_v^p}=1.
\]
Also, we see that
\[
\sum_{e\in E(G)}w(e)=\frac{r}{\lambda^{(p)}(G)}\sum_{e\in E(G)}\prod_{u\in e}x_u
=\frac{\lambda^{(p)}(G)}{\lambda^{(p)}(G)}=1.
\]
Therefore items (1) and (2) of \autoref{def:consistent normal} are verified. For 
item (3), we check that 
\begin{align*}
w(e)^{p-r}\cdot\prod_{v\in e}B(v,e) & =
\Bigg(\frac{r}{\lambda^{(p)}(G)}\prod_{u\in e}x_u\Bigg)^{p-r}\cdot
\prod_{v\in e}\frac{\prod_{u\in e}x_u}{\lambda^{(p)}(G)x_v^p}\\
& =\frac{r^{p-r}}{(\lambda^{(p)}(G))^p}=\alpha.
\end{align*}
To show that $B$ is consistent, for any $v\in V(G)$ and $v\in e_i$, $i=1,2,\ldots,d$, 
we have
\[
\frac{w(e_1)}{B(v,e_1)}=\frac{w(e_2)}{B(v,e_2)}=\cdots
=\frac{w(e_d)}{B(v,e_d)}=rx_v^p.
\]
    
($\Longleftarrow$) Assume that $G$ is consistently $\alpha$-normal with weighted 
incident matrix $B$ and $\{w(e)\}$. For any nonnegative vector 
$\bm{x}=(x_1,x_2,\ldots,x_n)^{\mathrm{T}}\in\mathbb{S}^{n-1}_{p,+}$, 
by H\"older's inequality and AM--GM inequality, we have
\begin{align*}
P_G(\bm{x}) 
& =r\sum_{\{i_1,i_2,\ldots,i_r\}\in E(G)}x_{i_1}x_{x_2}\cdots x_{i_r}\\
& =\frac{r}{\alpha^{1/p}}\sum_{e\in E(G)}\Bigg(w(e)^{(p-r)/p}\cdot\prod_{v\in e}\big(B(v,e)\big)^{1/p}x_v\Bigg)\\
& \leq \frac{r}{\alpha^{1/p}}\Bigg(\sum_{e\in E(G)}w(e)\Bigg)^{(p-r)/p}
  \Bigg(\sum_{e\in E(G)}\prod_{v\in e}\big(B(v,e)\big)^{1/r}x_v^{p/r}\Bigg)^{r/p}\\
& =\frac{r}{\alpha^{1/p}}\Bigg(\sum_{e\in E(G)}\prod_{v\in e}\big(B(v,e)\big)^{1/r}x_v^{p/r}\Bigg)^{r/p}\\
& \leq \frac{r^{1-r/p}}{\alpha^{1/p}}\Bigg(\sum_{e\in E(G)}\sum_{v\in e}B(v,e)x_v^p\Bigg)^{r/p}\\
& =\frac{r^{1-r/p}}{\alpha^{1/p}}\cdot||\bm{x}||_p^r=\frac{r^{1-r/p}}{\alpha^{1/p}}.
\end{align*}
This inequality implies $\lambda^{(p)}(G)\leq r^{1-r/p}\alpha^{-1/p}$.

The equality holds if $G$ is $\alpha$-normal and there is a nonzero solution $\bm{x}$ 
to the following equations:
\begin{equation}\label{eq:solution}
B(i_1,e)x_{i_1}^p=B(i_2,e)x_{i_2}^p=\cdots=B(i_r,e)x_{i_r}^p=\frac{w(e)}{r}, 
\end{equation}
for any $e=\{i_1,i_2,\ldots,i_r\}\in E(G)$. Define
\begin{equation}\label{eq:define x}
x_v^*=\left(\frac{w(e)}{rB(v,e)}\right)^{1/p},\ v\in e.   
\end{equation}
The consistent condition guarantees that $x_v^*$ is independent of the choice of the 
edge $e$. It is easy to check that $\bm{x}^*=(x_1^*,x_2^*,\ldots,x_n^*)^{\mathrm{T}}$ 
is a solution of \eqref{eq:solution}. Equation \eqref{eq:define x} also implies that
\[
rB(v,e)(x_v^*)^p=w(e),\ v\in e.   
\] 
Hence, the $\ell^p$-norm of $\bm{x}^*$ is
\begin{align*}
\sum_{v\in V(G)}(x_v^*)^p & =\sum_{e\in E(G)}\sum_{u\in e}B(u,e)(x_u^*)^p\\
& =\sum_{e\in E(G)}rB(u,e)(x_u^*)^p\\
& =\sum_{e\in E(G)}w(e)=1,
\end{align*}
which follows that $\lambda^{(p)}(G)=r^{1-r/p}\alpha^{-1/p}$.
\end{proof}

\begin{example}
Consider the following grid hypergraph $G_1$, which is a $4$-uniform hypergraph 
with $25$ vertices and $16$ edges generated by subdividing a square. Let 
\[
w_1=\frac{1}{4(1+4^{1/(p-2)})^2},~
w_2=\frac{4^{1/(p-2)}}{4(1+4^{1/(p-2)})^2},~
w_3=\frac{4^{2/(p-2)}}{4(1+4^{1/(p-2)})^2}.    
\]
For each vertex $v\in V(G_1)$ and edge $e\in E(G_1)$ with $v\in e$, we put a 
weight $w(e)$ at the center of $e$, and label the value $B(v,e)$ at each corner 
of the edge $e$ as follows:
\begin{center}
\begin{tikzpicture}[scale=0.9,font=\scriptsize] 
\filldraw[fill=gray!25,draw=black,line width=0.7pt] (0,0) rectangle (8,8);  
\foreach \i in {2,4,6}
{
\draw[line width=0.7pt] (\i,0)--(\i,8);
\draw[line width=0.7pt] (0,\i)--(8,\i);
}

\foreach \v in {(1,1),(1,7),(7,1),(7,7)}
\node at \v {$w_1$};
\foreach \v in {(1,3),(1,5),(3,1),(3,7),(5,1),(5,7),(7,3),(7,5)}
\node at \v {$w_2$};
\foreach \v in {(3,3),(3,5),(5,3),(5,5)}
\node at \v {$w_3$};

\node[anchor=north east] at (2,2) {$4w_1$};
\node[anchor=north west] at (0,2) {$2\sqrt{w_1}$};
\node[anchor=south east] at (2,0) {$2\sqrt{w_1}$};
\node[anchor=south west] at (2,0) {$2\sqrt{w_3}$};
\node[anchor=south east] at (4,0) {$\frac12$};
\node[anchor=north east] at (4,2) {$\sqrt{w_1}$};
\node[anchor=north west] at (2,2) {$4w_2$};
\node[anchor=south west] at (4,0) {$\frac12$};
\node[anchor=south east] at (6,0) {$2\sqrt{w_3}$};
\node[anchor=north east] at (6,2) {$4w_2$};
\node[anchor=north west] at (4,2) {$\sqrt{w_1}$};
\node[anchor=south west] at (6,0) {$2\sqrt{w_1}$};
\node[anchor=north east] at (8,2) {$2\sqrt{w_1}$};
\node[anchor=north west] at (6,2) {$4w_1$};

\node[anchor=south west] at (0,2) {$2\sqrt{w_3}$};
\node[anchor=south east] at (2,2) {$4w_2$};
\node[anchor=north east] at (2,4) {$\sqrt{w_1}$};
\node[anchor=north west] at (0,4) {$\frac12$};
\node[anchor=south west] at (2,2) {$4w_3$};
\node[anchor=south east] at (4,2) {$\sqrt{w_3}$};
\node[anchor=north east] at (4,4) {$\frac14$};
\node[anchor=north west] at (2,4) {$\sqrt{w_3}$};
\node[anchor=south west] at (4,2) {$\sqrt{w_3}$};
\node[anchor=south east] at (6,2) {$4w_3$};
\node[anchor=north east] at (6,4) {$\sqrt{w_3}$};
\node[anchor=north west] at (4,4) {$\frac14$};
\node[anchor=south west] at (6,2) {$4w_2$};
\node[anchor=south east] at (8,2) {$2\sqrt{w_3}$};
\node[anchor=north east] at (8,4) {$\frac12$};
\node[anchor=north west] at (6,4) {$\sqrt{w_1}$};

\node[anchor=south west] at (0,4) {$\frac12$};
\node[anchor=south east] at (2,4) {$\sqrt{w_1}$};
\node[anchor=north east] at (2,6) {$4w_2$};
\node[anchor=north west] at (0,6) {$2\sqrt{w_3}$};
\node[anchor=south west] at (2,4) {$\sqrt{w_3}$};
\node[anchor=south east] at (4,4) {$\frac14$};
\node[anchor=north east] at (4,6) {$\sqrt{w_3}$};
\node[anchor=north west] at (2,6) {$4w_3$};
\node[anchor=south west] at (4,4) {$\frac14$};
\node[anchor=south east] at (6,4) {$\sqrt{w_3}$};
\node[anchor=north east] at (6,6) {$4w_3$};
\node[anchor=north west] at (4,6) {$\sqrt{w_3}$};
\node[anchor=south west] at (6,4) {$\sqrt{w_1}$};
\node[anchor=south east] at (8,4) {$\frac12$};
\node[anchor=north east] at (8,6) {$2\sqrt{w_3}$};
\node[anchor=north west] at (6,6) {$4w_2$};

\node[anchor=south west] at (0,6) {$2\sqrt{w_1}$};
\node[anchor=south east] at (2,6) {$4w_1$};
\node[anchor=north east] at (2,8) {$2\sqrt{w_1}$};
\node[anchor=south west] at (2,6) {$4w_2$};
\node[anchor=south east] at (4,6) {$\sqrt{w_1}$};
\node[anchor=north east] at (4,8) {$\frac12$};
\node[anchor=north west] at (2,8) {$2\sqrt{w_3}$};
\node[anchor=south west] at (4,6) {$\sqrt{w_1}$};
\node[anchor=south east] at (6,6) {$4w_2$};
\node[anchor=north east] at (6,8) {$2\sqrt{w_3}$};
\node[anchor=north west] at (4,8) {$\frac12$};
\node[anchor=south west] at (6,6) {$4w_1$};
\node[anchor=south east] at (8,6) {$2\sqrt{w_1}$};
\node[anchor=north west] at (6,8) {$2\sqrt{w_1}$};

\foreach \i in {0,2,4,6,8}
\foreach \j in {0,2,4,6,8}
\filldraw[fill=black] (\i,\j) circle (0.08);
\end{tikzpicture}
\end{center}
It can be checked that the grid hypergraph $G_1$ is consistently $\alpha$-normal with 
\[
\alpha=\frac{1}{4^{p-4}(1+4^{1/(p-2)})^{2(p-2)}}.   
\]
Therefore, the $p$-spectral radius of $G_1$ is 
\[
\lambda^{(p)}(G_1)=(16)^{1-4/p}(1+4^{1/(p-2)})^{2(p-2)/p}.    
\]
In \cite{Nikiforov2014:Analytic Methods}, Nikiforov proved that the $p$-spectral radius
is a Lipschitz function in $p$. Taking $p\to 4^+$, we obtain that the spectral radius 
of $G_1$ is $\rho(G_1)=3$.
\end{example}

The consistent condition in \autoref{def:consistent normal} shows that 
\[
B(v,e)=\frac{w(e)}{\sum_{f:\,v\in f}w(f)}
\] 
for any $v\in e$. Therefore we immediately have the following statement:
Let $G$ be an $r$-uniform hypergraph, then the $p$-spectral radius of $G$ is
$\lambda^{(p)}(G)$ if and only if there exist weights $\{w(e)\}$ such that 
$\sum_{e\in E(G)}w(e)=1$ and for each $e\in E(G)$,
\begin{equation}\label{eq:w}
\frac{w(e)^p}{\prod_{v\in e}\sum_{f:\,v\in f}w(f)}=\alpha,      
\end{equation}
with $\alpha=r^{p-r}/(\lambda^{(p)}(G))^p$. In some cases, the above conclusion is 
convenient to calculate $\lambda^{(p)}(G)$.

\begin{example}
Consider the following $3$-uniform hypergraph $G_2$ with $8$ vertices and $4$ edges.
\begin{center}
\begin{tikzpicture}[scale=1.5]
\filldraw[fill=gray!30,draw=gray!30] (0,0)--(120:1)--(-1,0)--(0,0);
\filldraw[fill=gray!30,draw=gray!30] (0,0)--(-1,0)--(240:1)--(0,0);
\filldraw[fill=gray!30,draw=gray!30] (0,0)--(30:1)--(0,1)--(0,0);
\filldraw[fill=gray!30,draw=gray!30] (0,0)--(330:1)--(0,-1)--(0,0);
\draw[line width=1pt,white] (0,0)--(-1,0);

\foreach \u in {(0,0),(30:1),(0,1),(120:1),(-1,0),(240:1),(0,-1),(330:1)}
\filldraw[fill=black] \u circle (0.045);

\node at (-0.5,0.4) {$e_1$};
\node at (-0.5,-0.4) {$e_4$};
\node at (0.433,0.5) {$e_2$};
\node at (0.433,-0.5) {$e_3$};
\end{tikzpicture}  
\end{center}   
Putting a weight $w_i$ on each edge $e_i$, $i=1$, $2$, $3$, $4$, the consistent condition 
shows that $w_1=w_4$, and $w_2=w_3$. We can obtain $\lambda^{(p)}(G_2)$ by solving the 
following system of equations:
\[
\begin{dcases}
  \frac{w_1^p}{2w_1^2(2w_1+2w_2)}=\alpha\\
  \frac{w_2^p}{w_2^2(2w_1+2w_2)}=\alpha\\
 2w_1+2w_2=1.   
\end{dcases}    
\]
\end{example}
We get $w_1=\frac{2^{1/(p-2)}}{2(1+2^{1/(p-2)})}$, $w_2=\frac{1}{2(1+2^{1/(p-2)})}$, and
$\alpha=2^{2-p}(1+2^{1/(p-2)})^{2-p}$. Thus, the $p$-spectral radius of $G_2$ is
\[ 
\lambda^{(p)}(G_2)=3^{1-3/p}\cdot 2^{1-2/p}\cdot (1+2^{1/(p-2)})^{1-2/p}.
\]
In particular, taking $p\to 3^+$, we get $\rho(G_2)=\sqrt[3]{6}$.

\begin{definition}\label{def:subnormal}
An $r$-uniform hypergraph $G$ is called {\em $\alpha$-subnormal} if there exist a 
weighted incidence matrix $B$ and weights $\{w(e)\}$ satisfying
\begin{enumerate}
\item[(1)] $\displaystyle\sum_{e\in E(G)}w(e)\leq 1$;\\

\item[(2)] $\displaystyle\sum_{e:\,v\in e}B(v,e)\leq 1$, for any $v\in V(G)$;\\

\item[(3)] $\displaystyle w(e)^{p-r}\cdot\prod_{v\in e}B(v,e)\geq\alpha$, 
for any $e\in E(G)$.
\end{enumerate}
Moreover, $G$ is called {\em strictly $\alpha$-subnormal} if it is $\alpha$-subnormal 
but not $\alpha$-normal.
\end{definition}

\begin{lemma}\label{lem:subnormal}
Let $G$ be an $r$-uniform hypergraph. If $G$ is $\alpha$-subnormal, then the $p$-spectral
radius of $G$ satisfies
\[
\lambda^{(p)}(G)\leq \frac{r^{1-r/p}}{\alpha^{1/p}}.   
\]
\end{lemma}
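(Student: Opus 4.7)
The plan is to follow the same template as the $(\Longleftarrow)$ direction of \autoref{lem:iff}, replacing the three equalities in the consistent $\alpha$-normal definition with the three inequalities in \autoref{def:subnormal} and verifying that each relaxation pushes the bound in the correct direction. Since I no longer require an eigenvector achieving equality (there is no "consistent" hypothesis), I only need to establish the upper bound, and this can be done uniformly in a test vector $\bm{x}\in\mathbb{S}^{n-1}_{p,+}$.

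First, I would fix an arbitrary nonnegative $\bm{x}=(x_1,\ldots,x_n)^{\mathrm{T}}\in\mathbb{S}^{n-1}_{p,+}$. Condition (3) of \autoref{def:subnormal} gives $\alpha^{1/p}\leq w(e)^{(p-r)/p}\prod_{v\in e}B(v,e)^{1/p}$ for every edge, so each monomial in $P_G(\bm{x})$ can be bounded by
\[
\prod_{v\in e} x_v \;\leq\; \frac{1}{\alpha^{1/p}}\, w(e)^{(p-r)/p}\prod_{v\in e}\bigl(B(v,e)\bigr)^{1/p} x_v,
\]
which yields
\[
P_G(\bm{x}) \;\leq\; \frac{r}{\alpha^{1/p}}\sum_{e\in E(G)} w(e)^{(p-r)/p}\prod_{v\in e}\bigl(B(v,e)\bigr)^{1/p} x_v.
\]
Next I would apply Hölder's inequality with conjugate exponents $p/(p-r)$ and $p/r$, obtaining the product of $\bigl(\sum_{e} w(e)\bigr)^{(p-r)/p}$ (which is at most $1$ by condition (1)) and $\bigl(\sum_{e}\prod_{v\in e}(B(v,e))^{1/r}x_v^{p/r}\bigr)^{r/p}$. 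For the second factor I would apply AM--GM edgewise to get $\prod_{v\in e}(B(v,e))^{1/r}x_v^{p/r}\leq\frac{1}{r}\sum_{v\in e}B(v,e)x_v^p$, then swap the order of summation to obtain $\sum_{v} x_v^p \sum_{e:\,v\in e} B(v,e) \leq \sum_{v} x_v^p = 1$ via condition (2). Combining these estimates gives $P_G(\bm{x})\leq r^{1-r/p}/\alpha^{1/p}$, and taking the maximum over $\bm{x}$ finishes the argument.

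The argument runs on the same rails as \autoref{lem:iff}, so there is no serious obstacle; the only thing that needs checking is that each relaxation from an equality to an inequality preserves the direction of the final bound. Because $p>r$, the exponent $(p-r)/p$ is nonnegative, and the weights $w(e)$, corner labels $B(v,e)$ and powers $x_v^p$ are all nonnegative, so every link in the chain of inequalities indeed goes the right way. Notably, the consistent condition plays no role here, since its only purpose in \autoref{lem:iff} was to produce a vector $\bm{x}^*$ saturating the bound, which is irrelevant when only an upper bound is required.
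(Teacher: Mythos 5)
Your proposal is correct and follows essentially the same argument as the paper: bound each monomial via condition (3), apply H\"older's inequality with exponents $p/(p-r)$ and $p/r$ together with condition (1), then AM--GM edgewise and a swap of summation with condition (2). Your closing observation that the consistency condition is only needed for achieving equality (and hence not here) also matches the paper's treatment.
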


\begin{proof}
For any nonnegative vector $\bm{x}=(x_1,x_2,\ldots,x_n)^{\mathrm{T}}\in\mathbb{S}^{n-1}_{p,+}$, 
by H\"older's inequality and AM--GM inequality, we have
\begin{align*}
r\sum_{\{i_1,\ldots,i_r\}\in E(G)}x_{i_1}\cdots x_{i_r} & \leq 
\frac{r}{\alpha^{1/p}}\sum_{e\in E(G)}\Bigg(w(e)^{(p-r)/p}\cdot\prod_{v\in e}\big(B(v,e)\big)^{1/p}x_v\Bigg)\\
& \leq \frac{r}{\alpha^{1/p}}\Bigg(\sum_{e\in E(G)}w(e)\Bigg)^{(p-r)/p}
\Bigg(\sum_{e\in E(G)}\prod_{v\in e}\big(B(v,e)\big)^{1/r}x_v^{p/r}\Bigg)^{r/p}\\
& \leq \frac{r}{\alpha^{1/p}}
\Bigg(\sum_{e\in E(G)}\prod_{v\in e}\big(B(v,e)\big)^{1/r}x_v^{p/r}\Bigg)^{r/p}\\
& \leq \frac{r^{1-r/p}}{\alpha^{1/p}}\Bigg(\sum_{e\in E(G)}\sum_{v\in e}B(v,e)x_v^p\Bigg)^{r/p}\\
& \leq \frac{r^{1-r/p}}{\alpha^{1/p}},
\end{align*}
which yields $\lambda^{(p)}(G)\leq r^{1-r/p}\alpha^{-1/p}$. When $G$ is strictly $\alpha$-subnormal, 
this inequality is strict, and therefore $\lambda^{(p)}(G)<r^{1-r/p}\alpha^{-1/p}$.
\end{proof}

\begin{definition}
An $r$-uniform hypergraph $G$ is called {\em $\alpha$-supernormal} if there exist a weighted 
incidence matrix $B$ and weights $\{w(e)\}$ satisfying
\begin{enumerate}
\item[(1)] $\displaystyle\sum_{e\in E(G)}w(e)\geq 1$;

\item[(2)] $\displaystyle\sum_{e:\,v\in e}B(v,e)\geq 1$, for any $v\in V(G)$;\\

\item[(3)] $\displaystyle w(e)^{p-r}\cdot\prod_{v\in e}B(v,e)\leq\alpha$, for any $e\in E(G)$.
\end{enumerate}
Moreover, $G$ is called {\em strictly $\alpha$-supernormal} if it is $\alpha$-supernormal but 
not $\alpha$-normal.
\end{definition}

\begin{lemma}\label{lem:supernormal}
Let $G$ be an $r$-uniform hypergraph. If $G$ is consistently $\alpha$-supernormal, then the 
$p$-spectral radius of $G$ satisfies
\[
\lambda^{(p)}(G)\geq \frac{r^{1-r/p}}{\alpha^{1/p}}.    
\]
\end{lemma}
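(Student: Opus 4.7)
The plan is to mirror the sufficiency direction of \autoref{lem:iff} by constructing an explicit test vector from the consistent supernormal labeling and then using homogeneity of $P_G$. Since $P_G$ is homogeneous of degree $r$, for any nonzero $\bm{x}$ one has $\lambda^{(p)}(G)\ge P_G(\bm{x})/\|\bm{x}\|_p^r$, so it suffices to exhibit an $\bm{x}^*$ for which the right-hand side is at least $r^{1-r/p}\alpha^{-1/p}$.

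First I define, for each $v\in V(G)$,
\[
x_v^* = \left(\frac{w(e)}{rB(v,e)}\right)^{1/p}\quad\text{for any }e\ni v,
\]
exactly as in \eqref{eq:define x}. The consistency hypothesis makes $x_v^*$ independent of the edge $e$ chosen. In particular $rB(v,e)(x_v^*)^p = w(e)$ for every incidence $v\in e$.

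Next I bound $\|\bm{x}^*\|_p^p$ from above. Summing $B(v,e)(x_v^*)^p = w(e)/r$ over $v\in e$ gives $\sum_{v\in e}B(v,e)(x_v^*)^p = w(e)$, so swapping the order of summation and using condition~(2) of the supernormal definition,
\[
\sum_{e\in E(G)}w(e) = \sum_{v\in V(G)}(x_v^*)^p\sum_{e:\,v\in e}B(v,e) \ge \sum_{v\in V(G)}(x_v^*)^p,
\]
hence $\|\bm{x}^*\|_p^p \le \sum_{e}w(e)$. Then I bound $P_G(\bm{x}^*)$ from below. Condition~(3) gives $\prod_{v\in e}B(v,e)\le \alpha\,w(e)^{r-p}$, so
\[
\prod_{v\in e}x_v^* = \left(\frac{w(e)^r}{r^r\prod_{v\in e}B(v,e)}\right)^{1/p} \ge \frac{w(e)}{r^{r/p}\alpha^{1/p}},
\]
which yields $P_G(\bm{x}^*) \ge (r^{1-r/p}/\alpha^{1/p})\sum_e w(e)$.

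Finally, combining the two estimates,
\[
\lambda^{(p)}(G) \ge \frac{P_G(\bm{x}^*)}{\|\bm{x}^*\|_p^r} \ge \frac{r^{1-r/p}}{\alpha^{1/p}}\left(\sum_{e\in E(G)}w(e)\right)^{1-r/p},
\]
and condition~(1) together with $p>r$ (so $1-r/p>0$) forces the parenthesized factor to be at least $1$, giving the desired bound. The main obstacle is simply making sure the three supernormal inequalities swing in compatible directions: the test vector is built exactly as in the equality case, and the role of $p>r$ is crucial in the very last step, because it is what converts $\sum_e w(e)\ge 1$ into a factor $\ge 1$ rather than $\le 1$. Everything else is the same Hölder/AM--GM calculation as in \autoref{lem:iff} and \autoref{lem:subnormal}, but run in the reverse direction via a single explicit test vector.
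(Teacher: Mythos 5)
Your proof is correct, and it rests on the same key object as the paper's argument: the test vector $x_v^*=\big(w(e)/(rB(v,e))\big)^{1/p}$ of \eqref{eq:define x}, well defined thanks to the consistency hypothesis. The execution, however, is genuinely more direct. The paper asserts that there is a vector in $\mathbb{S}_{p,++}^{n-1}$ satisfying \eqref{eq:solution} and then reruns the H\"older/AM--GM chain of \autoref{lem:subnormal} in the reverse direction, using that this vector attains equality in both H\"older and AM--GM; strictly speaking the exact solution of \eqref{eq:solution} need not have unit $p$-norm, so one must rescale and note that only proportionality of the quantities $B(v,e)x_v^p$ within and across edges is needed for those equality cases. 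Your route sidesteps this normalization issue entirely: with the unnormalized $\bm{x}^*$ everything is explicit --- condition (3) gives $\prod_{v\in e}x_v^*\ge w(e)\,r^{-r/p}\alpha^{-1/p}$, condition (2) gives $\|\bm{x}^*\|_p^p\le\sum_e w(e)$ --- and homogeneity of $P_G$ together with $\sum_e w(e)\ge 1$ and $p>r$ finishes the proof without invoking H\"older or AM--GM at all. Two cosmetic remarks: it is worth one sentence observing that condition (2) of the supernormal definition already forces every vertex to lie in some edge (an empty sum cannot be $\ge 1$), so $x_v^*$ is defined at every vertex; and your computation in fact shows the slightly stronger bound $\lambda^{(p)}(G)\ge r^{1-r/p}\alpha^{-1/p}\big(\sum_e w(e)\big)^{1-r/p}$, which also makes transparent where strictness would come from if any of the three conditions is strict.
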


\begin{proof}
The consistent condition implies that there exists a vector 
$\bm{x}=(x_1,x_2,\ldots,x_n)^{\mathrm{T}}\in\mathbb{S}_{p,++}^{n-1}$
satisfying \eqref{eq:solution}. Therefore
\begin{align*}
r\sum_{\{i_1,\ldots,i_r\}\in E(G)}x_{i_1}\cdots x_{i_r} & \geq 
\frac{r}{\alpha^{1/p}}\sum_{e\in E(G)}\Bigg(w(e)^{(p-r)/p}\cdot\prod_{v\in e}(B(v,e))^{1/p}x_v\Bigg)\\
& =\frac{r}{\alpha^{1/p}}\Bigg(\sum_{e\in E(G)}w(e)\Bigg)^{(p-r)/p}
  \Bigg(\sum_{e\in E(G)}\prod_{v\in e}\big(B(v,e)\big)^{1/r}x_v^{p/r}\Bigg)^{r/p}\\
& \geq \frac{r}{\alpha^{1/p}}\Bigg(\sum_{e\in E(G)}\prod_{v\in e}\big(B(v,e)\big)^{1/r}x_v^{p/r}\Bigg)^{r/p}\\
& =\frac{r^{1-r/p}}{\alpha^{1/p}}\Bigg(\sum_{e\in E(G)}\sum_{v\in e}B(v,e)x_v^p\Bigg)^{r/p}\\
& \geq \frac{r^{1-r/p}}{\alpha^{1/p}},
\end{align*}
which yields $\lambda^{(p)}(G)\geq r^{1-r/p}\alpha^{-1/p}$. When $G$ is strictly 
$\alpha$-supernormal, this inequality is strict, and therefore 
$\lambda^{(p)}(G)>r^{1-r/p}\alpha^{-1/p}$.
\end{proof}

\section{Applications for $p>r$}

In this section, we give some applications of $\alpha$-normal labeling method for the range
$p>r$. Let 
$G$ be an $r$-uniform hypergraph, and $G_i$ be the connected components of $G$, 
$i\in[k]$. If $1\leq p\leq r$, Nikiforov \cite{Nikiforov2014:Analytic Methods} 
proved that $\lambda^{(p)}(G)=\max_{1\leq i\leq k}\{\lambda^{(p)}(G_i)\}$, 
while the statement is different for $p>r$. Here we use \autoref{lem:iff} 
to give a new proof for the case $p>r$.

\begin{proposition}[\cite{Nikiforov2014:Analytic Methods}] 
Let $p>r\geq 2$, and let $G_1,G_2,\ldots,G_k$ be the components of an $r$-uniform 
hypergraph $G$. If $G$ has no isolated vertices, then
\[
\lambda^{(p)}(G)=\Bigg(\sum_{i=1}^k\big(\lambda^{(p)}(G_i)\big)^{p/(p-r)}\Bigg)^{(p-r)/p}.    
\]
\end{proposition}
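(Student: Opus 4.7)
The plan is to glue together the consistent $\alpha_i$-normal labelings of the components and then read off $\lambda^{(p)}(G)$ from \autoref{lem:iff}. Since $G$ has no isolated vertices, each component $G_i$ has no isolated vertices, so by \autoref{lem:iff} there is a consistent $\alpha_i$-normal labeling $(B_i, \{w_i(e)\}_{e \in E(G_i)})$ of $G_i$ with $\alpha_i = r^{p-r}/(\lambda^{(p)}(G_i))^p$.

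The main idea is to build a labeling of $G$ by rescaling the edge-weights in each component by a single constant $c_i > 0$ while keeping the corner values $B(v,e) := B_i(v,e)$ untouched. Because each vertex $v$ lies in exactly one component, condition (2) of \autoref{def:consistent normal} transfers verbatim from $B_i$ to $B$, and the consistent condition on each star of $v$ also transfers (both $w$ and $B$ are multiplied by the same $c_i$ on that star). The two remaining conditions become
\[
\sum_{i=1}^k c_i \;=\; 1, \qquad c_i^{p-r}\alpha_i \;=\; \alpha \text{ for all } i,
\]
so the problem reduces to algebra: I would solve $c_i = (\alpha/\alpha_i)^{1/(p-r)}$ and impose $\sum_i c_i = 1$ to determine
\[
\alpha^{1/(p-r)} \;=\; \Bigl(\sum_{i=1}^k \alpha_i^{-1/(p-r)}\Bigr)^{-1}.
\]
Note that $p-r>0$ is what makes this solvable, and $\alpha_i > 0$ guarantees $c_i > 0$. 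With this choice of $c_i$'s, $G$ is consistently $\alpha$-normal, so \autoref{lem:iff} gives $\lambda^{(p)}(G) = r^{1-r/p}\alpha^{-1/p}$.

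It then remains to simplify. Substituting $\alpha_i = r^{p-r}/(\lambda^{(p)}(G_i))^p$ yields $\alpha_i^{-1/(p-r)} = r^{-1}(\lambda^{(p)}(G_i))^{p/(p-r)}$, so
\[
\alpha^{-1/p} \;=\; r^{-(p-r)/p}\Bigl(\sum_{i=1}^k (\lambda^{(p)}(G_i))^{p/(p-r)}\Bigr)^{(p-r)/p},
\]
and the exponents of $r$ in $r^{1-r/p}\alpha^{-1/p}$ collapse, giving exactly the claimed identity.

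There is no real obstacle beyond checking that this gluing is legitimate; the only subtlety is verifying that consistency is preserved. This is where the fact that each vertex sits in only one component is essential — any path/cycle of the consistent condition in $G$ stays inside a single $G_i$, so the common ratio $w(e)/B(v,e) = c_i \cdot w_i(e)/B_i(v,e)$ is still constant on each star. I would state this verification explicitly, then do the algebra above, and finally invoke \autoref{lem:iff}.
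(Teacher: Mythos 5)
Your proposal is correct and follows essentially the same route as the paper's proof: both glue the consistent $\alpha_i$-normal labelings of the components by keeping $B(v,e)=B_i(v,e)$ and rescaling the weights in $G_i$ by $c_i=(\alpha/\alpha_i)^{1/(p-r)}$ (the paper writes this as $w(e)=w_i(e)/(C\alpha_i^{1/(p-r)})$ with $C=\sum_i\alpha_i^{-1/(p-r)}$, i.e.\ $\alpha=C^{-(p-r)}$), then invoke \autoref{lem:iff} and simplify. The consistency check and the algebra in your write-up match the paper's computation exactly.
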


\begin{proof}
For any $i\in[k]$, let $G_i$ be consistently $\alpha_i$-normal with weighted incidence 
matrix $B_i$ and $\{w_i(e)\}$, where $\alpha_i=r^{p-r}/(\lambda^{(p)}(G_i))^p$. That is
\[
\begin{dcases}
\sum_{e\in E(G_i)}w_i(e)=1,\\
\sum_{e\in E(G_i):\,v\in e}B_i(v,e)=1,\ \text{for any}\ v\in V(G_i),\\
w_i(e)^{p-r}\prod_{v\in e}B_i(v,e)=\alpha_i,\ \text{for any}\ e\in E(G_i).
\end{dcases}
\]
For convenience, we denote 
\[
C:=\sum_{i=1}^k\frac{1}{\alpha_i^{1/(p-r)}}=
\frac{1}{r}\Bigg(\sum_{i=1}^k\big(\lambda^{(p)}(G_i)\big)^{p/(p-r)}\Bigg).
\]
Now we construct a weighted incidence matrix $B$ and $\{w(e)\}$ for $G$ as follows:
\begin{align*}
B(v,e) & =\begin{cases}
B_i(v,e), & \text{if}\ v\in V(G_i), e\in E(G_i),\\
0, & \text{otherwise},
\end{cases}\\[1mm]
w(e) & =\frac{w_i(e)}{C\alpha_i^{1/(p-r)}},\ \text{if}\ e\in E(G_i). 
\end{align*}
For any $v\in V(G)$, assume that $v\in V(G_i)$, then
\[
\sum_{e\in E(G):\,v\in e}B(v,e)=\sum_{e\in E(G_i):\,v\in e}B_i(v,e)=1.    
\] 
For each edge $e\in E(G)$, assume that $e\in E(G_j)$, then
\[
w(e)^{p-r}\prod_{v\in e}B(v,e)=\frac{w_j(e)^{p-r}\prod_{v\in e}B_j(v,e)}{C^{p-r}\alpha_j}
=\frac{1}{C^{p-r}}.    
\]
Also we have
\[
\sum_{e\in E(G)}w(e)=\frac{1}{C}\sum_{i=1}^k\frac{\sum_{e\in E(G_i)}w_i(e)}{\alpha_i^{1/(p-r)}} 
=\frac{1}{C}\sum_{i=1}^k\frac{1}{\alpha_i^{1/(p-r)}}=1.   
\]
Clearly, $\{B(v,e)\}$ and $\{w(e)\}$ are consistent labeling of $G$. Therefore $G$ is 
consistently $\alpha$-normal with $\alpha=C^{-(p-r)}$. By \autoref{lem:iff} we obtain
\begin{align*}
\lambda^{(p)}(G) & =\Big(\frac{r^{p-r}}{\alpha}\Big)^{1/p}=(rC)^{(p-r)/p}\\
& =\Bigg(\sum_{i=1}^k\big(\lambda^{(p)}(G_i)\big)^{p/(p-r)}\Bigg)^{(p-r)/p},  
\end{align*}
completing the proof.
\end{proof}

\begin{theorem}\label{degree_bound}
Let $G$ be an $r$-uniform hypergraph, and $d_v$ be the degree of vertex $v$. If $p>r$, then
\[
\lambda^{(p)}(G)\leq\Bigg(r\sum_{e\in E(G)}\prod_{v\in e}d_v^{1/(p-r)}\Bigg)^{(p-r)/p}.    
\]   
\end{theorem}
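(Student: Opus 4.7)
The plan is to apply \autoref{lem:subnormal} with a carefully chosen weighted incidence matrix and edge weights. Writing the desired bound as $\lambda^{(p)}(G)\le r^{1-r/p}/\alpha^{1/p}$, and setting $S:=\sum_{e\in E(G)}\prod_{v\in e}d_v^{1/(p-r)}$, one sees that the target value of $\alpha$ must be $\alpha=S^{-(p-r)}$. So the whole proof reduces to constructing $B$ and $\{w(e)\}$ realizing $\alpha$-subnormality with this value.

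The natural and essentially forced choice is as follows. First, put $B(v,e):=1/d_v$ whenever $v\in e$; this makes condition (2) of \autoref{def:subnormal} an equality, since $\sum_{e:\,v\in e}B(v,e)=d_v\cdot(1/d_v)=1$. With $B$ fixed, condition (3) demands $w(e)^{p-r}\ge\alpha\prod_{v\in e}d_v$, so the smallest admissible choice is
\[
w(e):=\alpha^{1/(p-r)}\prod_{v\in e}d_v^{1/(p-r)}=\frac{1}{S}\prod_{v\in e}d_v^{1/(p-r)}.
\]
A direct computation then gives $\sum_{e\in E(G)}w(e)=S^{-1}\cdot S=1$, so condition (1) also holds with equality, and condition (3) holds with equality by construction.

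Hence $G$ is (in fact exactly) $\alpha$-subnormal with $\alpha=S^{-(p-r)}$, and \autoref{lem:subnormal} yields
\[
\lambda^{(p)}(G)\le\frac{r^{1-r/p}}{\alpha^{1/p}}=r^{1-r/p}\,S^{(p-r)/p}=\Bigl(r\sum_{e\in E(G)}\prod_{v\in e}d_v^{1/(p-r)}\Bigr)^{(p-r)/p}.
\]

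There is no real obstacle here; the only non-mechanical step is guessing the correct labeling, and that is pinned down by the two normalization conditions once $B(v,e)=1/d_v$ is chosen. It is worth noting that the construction actually produces an $\alpha$-normal labeling (all three inequalities are equalities), but consistency is generally lost; thus \autoref{lem:iff} cannot be invoked to give equality, and the bound is only one-sided, as expected.
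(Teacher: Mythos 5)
Your proof is correct and is essentially identical to the paper's own argument: the same choices $B(v,e)=1/d_v$ and $w(e)=\frac{1}{S}\prod_{v\in e}d_v^{1/(p-r)}$, verified to be $\alpha$-subnormal with $\alpha=S^{-(p-r)}$, followed by \autoref{lem:subnormal}. Your closing remark about normality without consistency is a correct (and nice) observation, but it is not needed for the bound.
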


\begin{proof}
For convenience, denote
\[
C:=\sum_{e\in E(G)}\prod_{v\in e}d_v^{1/(p-r)}.    
\]
Define a weighted incident matrix $B$ and $\{w(e)\}$ for $G$ as follows:
\begin{align*}
B(v,e) & =
    \begin{cases}
    1/d_v, & \text{if}\ v\in e,\\
    0,     & \text{otherwise},
    \end{cases}\\
w(e) & =\frac{1}{C}\prod_{v\in e}d_v^{1/(p-r)}. 
\end{align*}
It can be checked that $G$ is $\alpha$-subnormal with $\alpha=C^{-(p-r)}$.
By \autoref{lem:subnormal} we have
\[
\lambda^{(p)}(G)\leq\frac{r^{1-r/p}}{\alpha^{1/p}}=(rC)^{1-r/p}= 
\Bigg(r\sum_{e\in E(G)}\prod_{v\in e}d_v^{1/(p-r)}\Bigg)^{(p-r)/p}.   
\]
The proof is completed.
\end{proof}

From Theorem \ref{degree_bound}, we immediately obtain the following result.

\begin{corollary}\label{corollary}
Let $G$ be an $r$-uniform hypergraph with $m$ edges. If $p>r$, then
\[
\lambda^{(p)}(G)\leq (rm)^{1-r/p}\cdot\max_{e\in E(G)} 
\prod_{v\in e}d_v^{1/p}.   
\]
\end{corollary}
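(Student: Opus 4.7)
The plan is to derive the corollary directly from \autoref{degree_bound} by replacing the sum over edges with a trivial upper bound in terms of the maximum. \autoref{degree_bound} gives
\[
\lambda^{(p)}(G)\leq\Bigg(r\sum_{e\in E(G)}\prod_{v\in e}d_v^{1/(p-r)}\Bigg)^{(p-r)/p},
\]
so the only remaining step is to control the inner sum.

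First I would observe that the sum has exactly $m$ terms, each of which is at most the maximum of $\prod_{v\in e}d_v^{1/(p-r)}$ over $e\in E(G)$. Thus
\[
\sum_{e\in E(G)}\prod_{v\in e}d_v^{1/(p-r)}\leq m\cdot\max_{e\in E(G)}\prod_{v\in e}d_v^{1/(p-r)}.
\]
Substituting into the bound from \autoref{degree_bound} and using $(p-r)/p=1-r/p$ together with the identity
\[
\Bigg(\max_{e\in E(G)}\prod_{v\in e}d_v^{1/(p-r)}\Bigg)^{(p-r)/p}=\max_{e\in E(G)}\prod_{v\in e}d_v^{1/p},
\]
yields the claimed inequality
\[
\lambda^{(p)}(G)\leq (rm)^{1-r/p}\cdot\max_{e\in E(G)}\prod_{v\in e}d_v^{1/p}.
\]

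There is essentially no obstacle here; the argument is just a two-line reduction. The only thing to be careful about is the algebra of the exponents, namely tracking that the outer exponent $(p-r)/p$ collapses the inner exponent $1/(p-r)$ into $1/p$, so that the maximum over edges emerges with the correct exponent.
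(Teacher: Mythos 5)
Your argument is correct and is exactly how the paper obtains this corollary: it states the result as an immediate consequence of \autoref{degree_bound}, i.e.\ by bounding the sum by $m$ times its largest term and collapsing the exponents $\frac{1}{p-r}\cdot\frac{p-r}{p}=\frac1p$. Nothing further is needed.
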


In the following, we present some properties of the function $\lambda^{(p)}(G)$ 
for a fixed $r$-uniform hypergraph $G$.

\begin{lemma}\label{lem:w(e) upper bound}
Suppose that $G$ is consistently $\alpha$-normal with weights $\{w(e)\}$. 
Let $\delta$ and $\Delta$ be the minimum degree and maximum degree of $G$, 
respectively. If $p>r$, then
\[
(\alpha\delta^r)^{1/(p-r)}\leq w(e)\leq (\alpha\Delta^r)^{1/(p-r)}.    
\] 
\end{lemma}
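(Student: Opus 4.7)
The plan is to first rewrite the three defining conditions in terms of $w(e)$ alone by exploiting consistency. Set $S(v)=\sum_{f:\,v\in f}w(f)$. The consistent condition together with item (2) of \autoref{def:consistent normal} forces $B(v,e)=w(e)/S(v)$ for every $v\in e$, as already noted in the paragraph following \autoref{lem:iff}. Substituting this into item (3) and simplifying,
\[
\alpha=w(e)^{p-r}\prod_{v\in e}\frac{w(e)}{S(v)}=\frac{w(e)^p}{\prod_{v\in e}S(v)},
\]
so the entire content of the $\alpha$-normal condition collapses to the single identity
\[
w(e)^p=\alpha\prod_{v\in e}S(v)\qquad\text{for every }e\in E(G).
\]

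Next I would attack the upper bound by selecting an extremal edge. Let $e^{*}$ be an edge achieving $w(e^{*})=\max_{e\in E(G)}w(e)$. For every $v\in e^{*}$, the sum $S(v)$ contains exactly $d_v$ terms, each bounded above by $w(e^{*})$, so $S(v)\le d_v\,w(e^{*})\le\Delta\,w(e^{*})$. Plugging into the boxed identity at $e=e^{*}$ gives
\[
w(e^{*})^p=\alpha\prod_{v\in e^{*}}S(v)\le\alpha\,\Delta^r\,w(e^{*})^r,
\]
and since $p>r$ we may divide by $w(e^{*})^r$ (which is positive) and take a $(p-r)$-th root to get $w(e^{*})\le(\alpha\Delta^r)^{1/(p-r)}$. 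Because $e^{*}$ was the maximum, this bound holds for every edge. The lower bound is entirely symmetric: picking $e_{*}$ with $w(e_{*})=\min w(e)$, every $S(v)$ for $v\in e_{*}$ is a sum of $d_v\ge\delta$ positive weights, each at least $w(e_{*})$, hence $S(v)\ge\delta\,w(e_{*})$. The same identity now yields $w(e_{*})^{p-r}\ge\alpha\delta^r$.

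There is no real obstacle here once the key reduction $w(e)^p=\alpha\prod_{v\in e}S(v)$ is in hand; the only thing to be careful about is the direction of the inequality after dividing by $w(e^{*})^r$ or $w(e_{*})^r$, which is legitimate precisely because $p-r>0$ and the weights are strictly positive (consistency plus the fact that $G$ has no isolated vertices). So the bulk of the write-up is just the short derivation of the identity and the two three-line extremal arguments.
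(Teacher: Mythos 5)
Your proof is correct and follows essentially the same route as the paper: the identity $w(e)^p=\alpha\prod_{v\in e}\sum_{f:\,v\in f}w(f)$ is exactly the paper's equation \eqref{eq:w}, and the paper likewise applies it at the minimum- and maximum-weight edges with the bounds $\delta\,w_{\min}\leq\sum_{f:\,v\in f}w(f)\leq\Delta\,w_{\max}$ to conclude. No gaps; your remark about dividing by a positive power and using $p-r>0$ matches the paper's (implicit) reasoning.
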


\begin{proof}
Without loss of generality, assume $w(e_1)=\min\{w(e): e\in E(G)\}$ and 
$w(e_2)=\max\{w(e): e\in E(G)\}$. According to \eqref{eq:w}, we have
\[
w(e_1)^p=\alpha\prod_{v\in e_1}\sum_{f:\,v\in f}w(f)\geq
\alpha(\delta\cdot w(e_1))^r,   
\] 
which yields $w(e_1)\geq (\alpha\delta^r)^{1/(p-r)}$. Similarly, we have
\[
w(e_2)^p=\alpha\prod_{v\in e_2}\sum_{f:\,v\in f}w(f)\leq
\alpha(\Delta\cdot w(e_2))^r,  
\]
which follows that $w(e_2)\leq (\alpha\Delta^r)^{1/(p-r)}$. The proof is completed. 
\end{proof}

\begin{theorem}\label{monotonic function}
Suppose that $G$ is an $r$-uniform hypergraph and $p>r$. Let
\[
f_G(p):=\bigg(\frac{\lambda^{(p)}(G)}{\Delta}\bigg)^{p/(p-r)},~
g_G(p):=\bigg(\frac{\lambda^{(p)}(G)}{\delta}\bigg)^{p/(p-r)}.    
\]    
Then $f_G(p)$ is non-decreasing in $p$ while $g_G(p)$ is non-increasing in $p$.
\end{theorem}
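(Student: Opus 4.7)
The plan is to reuse the consistent $\alpha$-normal labeling that witnesses $\lambda^{(p)}(G)$ as an $\alpha'$-super- or $\alpha'$-subnormal labeling at a shifted exponent $p'>p$, and then invoke \autoref{lem:supernormal} and \autoref{lem:subnormal}. The crucial observation is that keeping $(B,w)$ fixed and replacing $p$ by $p'$ in \autoref{def:consistent normal} leaves conditions~(1), (2), and the consistency condition untouched, while condition~(3) picks up exactly a factor of $w(e)^{p'-p}$; and \autoref{lem:w(e) upper bound} pins down this factor uniformly in $e$.

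Concretely, I would fix $r<p<p'$ and let $(B,w)$ be the consistent $\alpha$-normal labeling granted by \autoref{lem:iff}, so $\alpha=r^{p-r}/\lambda^{(p)}(G)^p$. From \autoref{lem:w(e) upper bound}, $(\alpha\delta^r)^{1/(p-r)}\leq w(e)\leq(\alpha\Delta^r)^{1/(p-r)}$. Substituting into
\[
w(e)^{p'-r}\prod_{v\in e}B(v,e)=w(e)^{p'-p}\cdot w(e)^{p-r}\prod_{v\in e}B(v,e)=w(e)^{p'-p}\alpha
\]
and setting $\alpha'_{\Delta}:=(\alpha\Delta^r)^{(p'-p)/(p-r)}\alpha$, $\alpha'_{\delta}:=(\alpha\delta^r)^{(p'-p)/(p-r)}\alpha$, I obtain the sandwich
\[
\alpha'_{\delta}\leq w(e)^{p'-r}\prod_{v\in e}B(v,e)\leq \alpha'_{\Delta}.
\]
Therefore $(B,w)$ is simultaneously a consistently $\alpha'_{\Delta}$-supernormal labeling and an $\alpha'_{\delta}$-subnormal labeling at exponent $p'$.

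Applying \autoref{lem:supernormal} yields $\lambda^{(p')}(G)\geq r^{1-r/p'}(\alpha'_{\Delta})^{-1/p'}$ and \autoref{lem:subnormal} yields $\lambda^{(p')}(G)\leq r^{1-r/p'}(\alpha'_{\delta})^{-1/p'}$. To conclude, raise each inequality to the power $p'/(p'-r)$ and divide by $\Delta^{p'/(p'-r)}$ or $\delta^{p'/(p'-r)}$. Using $\lambda^{(p)}(G)^{p/(p-r)}=r\alpha^{-1/(p-r)}$ to recognize $f_G(p)=r\alpha^{-1/(p-r)}\Delta^{-p/(p-r)}$, the desired inequalities $f_G(p')\geq f_G(p)$ and $g_G(p')\leq g_G(p)$ reduce to the identities
\[
r(\alpha'_{\Delta})^{-1/(p'-r)}\Delta^{-p'/(p'-r)}=r\alpha^{-1/(p-r)}\Delta^{-p/(p-r)},
\]
together with the analogous one for $\delta$; both follow on expanding $\alpha'_{\Delta}$ (resp.\ $\alpha'_{\delta}$) and collecting the $\alpha$- and $\Delta$-exponents, which telescope to $-1/(p-r)$ and $-p/(p-r)$ respectively.

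I do not anticipate a real obstacle here. The conceptual step---realizing that the $\alpha$-normal labeling is \emph{portable} between two values of $p$ at the price of an $\alpha'$ controlled by \autoref{lem:w(e) upper bound}---is the only nontrivial point, and it is precisely why \autoref{lem:w(e) upper bound} was set up in sharp form. Everything after is bookkeeping of exponents, and the normalization $p/(p-r)$ in the definitions of $f_G$ and $g_G$ is tailored so that the bookkeeping collapses to equality.
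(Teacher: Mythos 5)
Your proposal is correct and follows essentially the same route as the paper: keep the consistent $\alpha$-normal labeling from \autoref{lem:iff} fixed, use \autoref{lem:w(e) upper bound} to show it is consistently $\alpha'_\Delta$-supernormal (resp.\ $\alpha'_\delta$-subnormal) at the exponent $p'$, and then apply \autoref{lem:supernormal} and \autoref{lem:subnormal} with the same exponent bookkeeping. The only cosmetic difference is that you treat both bounds in one sandwich, while the paper works out the $\Delta$ case and handles $\delta$ ``similarly.''
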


\begin{proof}
Let $G$ be consistently $\alpha$-normal with weighted incidence matrix $B$
and weights $\{w(e)\}$ for $\lambda^{(p)}(G)$, i.e.,
\[
\begin{dcases}
\sum_{e\in E(G)}w(e)=1,\\
\sum_{e:\,v\in e}B(v,e)=1,\ \text{for any}\ v\in V(G),\\
w(e)^{p-r}\prod_{v\in e}B(v,e)=\alpha,\ \text{for any}\ e\in E(G).
\end{dcases}
\]
Let $r<p<p'$. We now define a weighted incidence matrix $B'$ and $\{w'(e)\}$ 
for $\lambda^{(p')}(G)$ as follows: 
\[
B'(v,e)=B(v,e),~w'(e)=w(e).   
\]
Therefore, we obtain
\[
\sum_{e\in E(G)}w'(e)=\sum_{e\in E(G)}w(e)=1
\]
and for any $v\in V(G)$,
\[
\sum_{e\in E(G):\,v\in e}B'(v,e)=\sum_{e\in E(G):\,v\in e}B(v,e)=1.
\]
Using \autoref{lem:w(e) upper bound} gives
\begin{align*}
w'(e)^{p'-r}\prod_{v\in e}B'(v,e)=w(e)^{p'-p}\alpha
& \leq (\alpha\Delta^r)^{(p'-p)/(p-r)}\alpha\\
& =\alpha^{(p'-r)/(p-r)}\Delta^{r(p'-p)/(p-r)}\\
& =r^{p'-r}\bigg(\frac{\Delta^{r(p'-p)}}{(\lambda^{(p)}(G))^{p(p'-r)}}\bigg)^{1/(p-r)}.  
\end{align*}
Hence, $G$ is consistently $\alpha'$-supernormal for $\lambda^{(p')}(G)$ with 
\[
\alpha'=r^{p'-r}\bigg(\frac{\Delta^{r(p'-p)}}{(\lambda^{(p)}(G))^{p(p'-r)}}\bigg)^{1/(p-r)}.     
\]
By \autoref{lem:supernormal} we have
\[
\lambda^{(p')}(G)\geq\frac{r^{1-r/p'}}{(\alpha')^{1/p'}}=
\bigg(\frac{(\lambda^{(p)}(G))^{p(p'-r)}}{\Delta^{r(p'-p)}}\bigg)^{1/(p'(p-r))},    
\]
which implies that
\[
\bigg(\frac{\lambda^{(p')}(G)}{\Delta}\bigg)^{p'/(p'-r)}\geq 
\bigg(\frac{\lambda^{(p)}(G)}{\Delta}\bigg)^{p/(p-r)}.  
\]
Similarly, we can obtain
\[
\bigg(\frac{\lambda^{(p')}(G)}{\delta}\bigg)^{p'/(p'-r)}\leq 
\bigg(\frac{\lambda^{(p)}(G)}{\delta}\bigg)^{p/(p-r)}.  
\]
The proof is completed. 
\end{proof}
Assume that $G$ has $m$ edges.
In \cite{Nikiforov2014:Analytic Methods}, Nikiforov proved that the function
$(\lambda^{(p)}(G)/(rm))^p$ is non-increasing in $p$. Here we give 
a new proof for $p>r$ using $\alpha$-normal labeling method.

\begin{theorem}[\cite{Nikiforov2014:Analytic Methods}]
Let $G$ be an $r$-uniform hypergraph with $m$ edges. Then the function 
$(\lambda^{(p)}(G)/(rm))^p$ is non-increasing in $p$.     
\end{theorem}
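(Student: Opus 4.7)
The plan is to apply the $\alpha$-subnormal machinery of \autoref{lem:subnormal} to bound $\lambda^{(p')}(G)$ in terms of $\lambda^{(p)}(G)$ and $m$ for any $r<p<p'$. By \autoref{lem:iff}, fix a consistent $\alpha(p)$-normal labeling $(B,w)$ of $G$ with $\alpha(p)=r^{p-r}/(\lambda^{(p)}(G))^{p}$; in particular $\prod_{v\in e}B(v,e)=\alpha(p)/w(e)^{p-r}$ for every $e$.

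The key step is to reuse $B$ and interpolate the weights by setting $B'=B$ and
\[
w'(e) := C\,w(e)^{(p-r)/(p'-r)}, \qquad C := \bigg(\sum_{e\in E(G)} w(e)^{(p-r)/(p'-r)}\bigg)^{\!-1}.
\]
The exponent is chosen precisely so that the $w(e)^{p-r}$ factor cancels inside the $p'$-normal product,
\[
w'(e)^{p'-r}\prod_{v\in e}B'(v,e) \;=\; C^{p'-r}\,w(e)^{p-r}\cdot\frac{\alpha(p)}{w(e)^{p-r}} \;=\; C^{p'-r}\alpha(p),
\]
a value independent of $e$. Combined with $\sum_e w'(e)=1$ (by the choice of $C$) and $\sum_{e\ni v}B'(v,e)=1$ (inherited from $B$), this shows that $G$ is $\alpha'$-subnormal for the parameter $p'$ with $\alpha':=C^{p'-r}\alpha(p)$.

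Since $(p-r)/(p'-r)\in(0,1)$, the map $t\mapsto t^{(p-r)/(p'-r)}$ is concave, so Jensen's inequality over the uniform average on the $m$ edges (recalling $\sum_e w(e)=1$) yields
\[
\sum_e w(e)^{(p-r)/(p'-r)} \;\leq\; m\bigg(\frac{1}{m}\sum_e w(e)\bigg)^{\!(p-r)/(p'-r)} \;=\; m^{(p'-p)/(p'-r)},
\]
hence $C\geq m^{-(p'-p)/(p'-r)}$. Now \autoref{lem:subnormal} gives $\lambda^{(p')}(G)\leq r^{1-r/p'}(\alpha')^{-1/p'}$; substituting $\alpha'=C^{p'-r}\alpha(p)$ together with $\alpha(p)=r^{p-r}/(\lambda^{(p)}(G))^{p}$, the exponents telescope cleanly to
\[
\lambda^{(p')}(G) \;\leq\; (rm)^{(p'-p)/p'}\bigl(\lambda^{(p)}(G)\bigr)^{p/p'},
\]
and raising both sides to the $p'$-th power and dividing by $(rm)^{p'}$ yields $(\lambda^{(p')}(G)/(rm))^{p'}\leq(\lambda^{(p)}(G)/(rm))^{p}$, which is exactly the claimed monotonicity.

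The principal obstacle is spotting the interpolation exponent $(p-r)/(p'-r)$. It is forced simultaneously by the requirement that the $w(e)^{p-r}$ factor cancel inside the $p'$-normal product, and by the need for the resulting power map to be concave with precisely the exponent $(p'-p)/(p'-r)$ that feeds Jensen's inequality with the sharp $m$-power. Once this exponent is identified, conditions (1)--(3) of \autoref{def:subnormal} are automatic and the rest is routine algebra.
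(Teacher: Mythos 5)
Your proposal is correct and follows essentially the same route as the paper: keep the incidence labels $B$ fixed, raise the edge weights to the power $(p-r)/(p'-r)$ (the paper normalizes by the explicit factor $m^{(p'-p)/(p'-r)}$ instead of your exact constant $C$, and uses H\"older where you use Jensen to get the same $m$-power bound), and then apply \autoref{lem:subnormal} to conclude $(\lambda^{(p')}(G))^{p'}\leq (rm)^{p'-p}(\lambda^{(p)}(G))^{p}$. The only cosmetic difference is where the normalization is absorbed, so the two arguments coincide.
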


\begin{proof}
Let $G$ be consistently $\alpha$-normal with weighted incidence matrix $B$ and weights 
$\{w(e)\}$ for $\lambda^{(p)}(G)$, where $\alpha=r^{p-r}/(\lambda^{(p)}(G))^p$.
Let $r<p<p'$. We define a weighted incidence matrix $B'$ and $\{w'(e)\}$ for 
$\lambda^{(p')}(G)$ as follows: 
\[
B'(v,e)=B(v,e),~w'(e)=\bigg(\frac{w(e)^{p-r}}{m^{p'-p}}\bigg)^{1/(p'-r)}.    
\] 
It is obvious that
\[
\sum_{e\in E(G):\,v\in e}B'(v,e)=\sum_{e\in E(G):\,v\in e}B(v,e)=1.    
\]
By H\"older's inequality, we see
\[
\sum_{e\in E(G)}w'(e)=\frac{1}{m^{(p'-p)/(p'-r)}}\sum_{e\in E(G)}w(e)^{(p-r)/(p'-r)}\leq 1.
\] 
For each edge $e\in E(G)$, we have
\[
w'(e)^{p'-r}\prod_{v\in e}B'(v,e)=\frac{w(e)^{p-r}}{m^{p'-p}}\prod_{v\in e}B(v,e)
=\frac{\alpha}{m^{p'-p}}.    
\]
Therefore, $G$ is $\alpha'$-subnormal for $\lambda^{(p')}(G)$, where $\alpha'=\alpha m^{p-p'}$. 
Using \autoref{lem:subnormal} gives 
\[
(\lambda^{(p')}(G))^{p'}\leq\frac{r^{p'-r}}{\alpha'}=(rm)^{p'-p}\cdot(\lambda^{(p)}(G))^p,    
\]
the result follows.
\end{proof}

\begin{theorem}\label{convex}
For any $r$-uniform hypergraph $G$, the function $h_G(p):=p\log \lambda^{(p)}(G)$ is concave 
upward on $(r,\infty)$.
\end{theorem}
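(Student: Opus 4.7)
The plan is to reparametrize the optimization defining $\lambda^{(p)}(G)$ so that the feasible set no longer depends on $p$, and then deduce convexity of $h_G(p)$ from a perspective-function calculation applied slice by slice in the new variable. This bypasses the $\alpha$-normal labelling machinery and relies instead on a substitution that is specially suited to the exponent structure of the $\ell^p$ constraint.

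First, after discarding isolated vertices (which do not change $\lambda^{(p)}(G)$), \autoref{thm:p>r} guarantees that the maximum defining $\lambda^{(p)}(G)$ is attained at a strictly positive vector. The substitution $y_v=x_v^p$ is then a bijection between $\mathbb{S}^{n-1}_{p,++}$ and the relative interior of the standard simplex $\Delta_n:=\{y\in\mathbb{R}^n:y\ge 0,\,\sum_v y_v=1\}$, and gives the reformulation
\[
\lambda^{(p)}(G)=\max_{y\in\Delta_n}\,r\sum_{e\in E(G)}\Bigl(\prod_{v\in e}y_v\Bigr)^{1/p}.
\]
The crucial feature is that $\Delta_n$ is independent of $p$. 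Taking logarithms, multiplying by $p>0$, and pulling $p$ inside the supremum yields
\[
h_G(p)=\sup_{y\in\Delta_n}\Bigl[\,p\log r+p\log\sum_{e\in E(G)}\exp\!\bigl(s_e(y)/p\bigr)\,\Bigr],\qquad s_e(y):=\sum_{v\in e}\log y_v,
\]
with the usual conventions $\log 0=-\infty$ and $e^{-\infty}=0$.

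Second, I would show that for each fixed $y\in\Delta_n$ the bracketed expression is convex in $p>0$. The term $p\log r$ is linear. For the remaining term, set $\phi(q):=\log\sum_{e\in E(G)}e^{qs_e(y)}$, which is the log-sum-exp function and so is convex in $q\in\mathbb{R}$. Then $p\log\sum_e e^{s_e/p}=p\phi(1/p)$ is the perspective of $\phi$ along the ray $q=1/p$, hence convex in $p>0$; equivalently a direct computation gives $\tfrac{d^2}{dp^2}\bigl[p\phi(1/p)\bigr]=\phi''(1/p)/p^3\ge 0$. Adding the linear term $p\log r$ preserves convexity.

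Finally, $h_G(p)$ is the pointwise supremum of a family of functions each convex in $p$, and therefore convex (concave upward) on $(r,\infty)$. I do not expect a serious obstacle: the perspective identity is routine, and the only bookkeeping point is that for $y\in\Delta_n$ whose support omits some edge $e$ one has $s_e(y)=-\infty$, so the corresponding summand vanishes; such $y$ contribute only extended-real-valued convex functions, which does not disturb the supremum argument.
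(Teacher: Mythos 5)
Your proof is correct, but it follows a genuinely different route from the paper. The paper works entirely inside the $\alpha$-normal labeling framework: for $r<p_1<p<p_2$ it takes consistently $\alpha_i$-normal labelings realizing $\lambda^{(p_i)}(G)$, forms convex combinations $B=\mu B_1+(1-\mu)B_2$ and $w=\xi w_1+(1-\xi)w_2$ with carefully chosen $\mu,\xi$, verifies via weighted AM--GM that this gives an $\alpha$-subnormal labeling with $\alpha=\alpha_1^{\mu}\alpha_2^{1-\mu}$, and then invokes \autoref{lem:subnormal}. You instead eliminate the labeling machinery altogether: the substitution $y_v=x_v^p$ turns the constraint set into the $p$-independent simplex, so that
\[
h_G(p)=\sup_{y\in\Delta_n}\Bigl[\,p\log r+p\,\phi_y(1/p)\Bigr],\qquad \phi_y(q)=\log\sum_{e\in E(G)}e^{q\,s_e(y)},
\]
and convexity follows from convexity of log-sum-exp, the perspective identity $\tfrac{d^2}{dp^2}\bigl[p\phi_y(1/p)\bigr]=\phi_y''(1/p)/p^3\ge 0$, and stability of convexity under pointwise suprema; your handling of boundary $y$ (vanishing summands, extended-real values) is adequate. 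What your approach buys: it needs neither \autoref{lem:iff} nor \autoref{lem:subnormal}, and in fact it never uses $p>r$, so it proves convexity of $p\log\lambda^{(p)}(G)$ on all of $[1,\infty)$, strictly more than the stated range $(r,\infty)$; your appeal to \autoref{thm:p>r} and the removal of isolated vertices are actually unnecessary, since the maximum of $P_G$ is attained at a nonnegative vector and the objective $\sum_e\bigl(\prod_{v\in e}y_v\bigr)^{1/p}$ is continuous on the closed simplex. What the paper's route buys is different: it stays within the $\alpha$-normal calculus that the paper is advertising, and the same convex-combination-of-labelings trick is reused immediately afterwards for \autoref{convex2}, so it illustrates the flexibility of the labeling method even though it is confined to $p>r$.
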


\begin{proof}
For any $r<p_1<p<p_2$, write $p=\mu p_1 + (1-\mu)p_2$, where $\mu=(p_2-p)/(p_2-p_1)$.
According to \autoref{lem:iff}, let $G$ be consistently $\alpha_i$-normal with weighted 
incident matrix $B_i$ and $\{w_i(e)\}$ for $\lambda^{(p_i)}(G)$, where 
$\alpha_i=r^{p_i-r}/(\lambda^{(p_i)}(G))^{p_i}$, $i=1$, $2$. That is
\[
\begin{dcases}
\sum_{e\in E(G)}w_i(e)=1,\\
\sum_{e:\,v\in e}B_i(v,e)=1,\ \text{for any}\ v\in V(G),\\
w_i(e)^{p_i-r}\prod_{v\in e}B_i(v,e)=\alpha_i,\ \text{for any}\ e\in E(G).
\end{dcases}
\]

Let $\xi=\frac{(p_1-r)(p_2-p)}{(p-r)(p_2-p_1)}\in (0,1)$. We define a weighted 
incidence matrix $B$ and $\{w(e)\}$ for $\lambda^{(p)}(G)$ as follows:
\begin{align*}
  B(v,e) & =\mu B_1(v,e)+(1-\mu) B_2(v,e),\\
  w(e)   & =\xi w_1(e) +(1-\xi)w_2(e).
\end{align*}
In the following we shall prove $\{B(v,e)\}$ and $\{w(e)\}$ are $\alpha$-subnormal 
labeling for $\alpha=\alpha_1^\mu\alpha_2^{1-\mu}$ (and for $\lambda^{(p)}(G)$). 
For any vertex $v\in V(G)$,
\begin{align*}
\sum_{e\in E(G):\,v\in e} B(v,e) 
& =\mu \sum_{e\in E(G):\,v\in e} B_1(v,e) + (1-\mu)\sum_{e\in E(G):\,v\in e} B_2(v,e)\\
& =\mu+(1-\mu)=1.
\end{align*}
We also have
\[ 
\sum_{e\in E(G)}w(e)= \xi \sum_{e\in E(G)}w_1(e) + (1-\xi) \sum_{e\in E(G)}w_2(e)
=\xi+(1-\xi)=1.
\]
For any edge $e\in E(G)$, we have
\begin{align*}
w(e)^{p-r}\prod_{v\in e}B(v,e) 
& =[\xi w_1(e) +(1-\xi)w_2(e)]^{p-r} \prod_{v\in e}(\mu B_1(v,e)+(1-\mu) B_2(v,e))\\
& \geq w_1(e)^{\xi(p-r)}w_2(e)^{(1-\xi)(p-r)}\prod_{v\in e} (B_1(v,e))^\mu (B_2(v,e))^{1-\mu}\\
& =w_1(e)^{\mu (p_1-r)} w_2(e)^{(1-\mu)(p_2-r)}\prod_{v\in e} (B_1(v,e))^\mu (B_2(v,e))^{1-\mu}\\
& = \bigg(w_1(e)^{p_1-r}\prod_{v\in e} B_1(v,e)\bigg)^\mu
    \bigg(w_2(e)^{p_2-r}\prod_{v\in e} B_2(v,e)\bigg)^{1-\mu}\\
& =\alpha_1^\mu\alpha_2^{1-\mu}.
\end{align*}
By \autoref{lem:subnormal}, we have
\begin{align*}
p\log \lambda^{(p)}(G)
& \leq p\log (r^{1-r/p} \alpha^{-1/p})\\
& = (p-r)\log r - \mu \log \alpha_1 -(1-\mu) \log \alpha_2\\
& =\mu[(p_1-r)\log r-\log \alpha_1] + (1-\mu)[(p_2-r)\log r-\log \alpha_2]\\
& =\mu p_1\log \lambda^{(p_1)}(G) +(1-\mu) p_2\log \lambda^{(p_2)}(G).
\end{align*}
Thus the function $h_G(p)=p\log \lambda^{(p)}(G)$ is concave upward on $(r,\infty)$.
\end{proof}

\begin{corollary}
The function $\lambda^{(p)}(G)$ is differentiable on $(r,\infty)$ except 
countable many $p$'s. Moreover, the left-hand derivative and the right-hand 
derivative always exist for any $p>r$.
\end{corollary}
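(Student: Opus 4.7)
The plan is to reduce everything to the classical regularity properties of convex functions of one real variable. By \autoref{convex}, the function $h_G(p):=p\log\lambda^{(p)}(G)$ is convex (``concave upward'') on the open interval $(r,\infty)$. Here we may assume $E(G)\neq\emptyset$, since otherwise $\lambda^{(p)}(G)\equiv 0$ and the conclusion is trivial; with at least one edge we have $\lambda^{(p)}(G)>0$ throughout $(r,\infty)$, so the logarithm is well defined.

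Next I would invoke the following two standard facts about a convex function $f$ on an open interval $(a,b)$: (i) the one-sided derivatives $f'_-(p)$ and $f'_+(p)$ exist at every $p\in(a,b)$ and satisfy $f'_-(p)\le f'_+(p)$; (ii) $f$ is differentiable at $p$ iff $f'_-(p)=f'_+(p)$, and the exceptional set of points where strict inequality holds is at most countable. Both facts follow quickly from the monotonicity of the difference quotient of a convex function: the slopes $\frac{f(q)-f(p)}{q-p}$ are non-decreasing in $q$ (and in $p$), so the one-sided limits at $p$ exist; moreover $p\mapsto f'_+(p)$ is non-decreasing, and a monotone function has at most countably many discontinuities, which are exactly the points where $f'_-<f'_+$. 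Applying (i) and (ii) to $h_G$ gives the corollary for $h_G$.

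Finally I would transfer the statement from $h_G$ to $\lambda^{(p)}(G)$ through the identity
\[
\lambda^{(p)}(G)=\exp\!\left(\frac{h_G(p)}{p}\right),\qquad p\in(r,\infty).
\]
Since $p\mapsto 1/p$ is $C^\infty$ on $(r,\infty)\subset(0,\infty)$ and $\exp$ is $C^\infty$ on $\mathbb{R}$, the chain rule implies that at any point $p$ where $h_G$ is differentiable, so is $\lambda^{(p)}(G)$; and at any point $p$ where only the one-sided derivatives of $h_G$ exist, the corresponding one-sided derivatives of $\lambda^{(p)}(G)$ exist as well (the smooth outer functions preserve the existence of left/right limits of difference quotients). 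Hence $\lambda^{(p)}(G)$ admits left- and right-hand derivatives at every $p>r$, and is differentiable outside a countable set.

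There is essentially no hard step here; the argument is a direct consequence of \autoref{convex} combined with classical one-variable convex analysis. The only place that requires a small amount of care is ensuring that composing with the smooth maps $p\mapsto 1/p$ and $\exp$ does not destroy either one-sided differentiability or the ``countable exceptional set'' property, which is immediate from the chain rule since the outer maps are $C^1$ with nonzero derivative.
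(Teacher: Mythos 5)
Your argument is correct and is essentially the one the paper intends: the corollary is stated as an immediate consequence of \autoref{convex}, i.e.\ the classical facts that a convex function on an open interval has one-sided derivatives everywhere and is differentiable off a countable set, applied to $h_G(p)=p\log\lambda^{(p)}(G)$. Your extra care in transferring these properties back to $\lambda^{(p)}(G)=\exp\bigl(h_G(p)/p\bigr)$ via the chain rule (and in dismissing the edgeless case) is exactly the routine verification the paper leaves implicit, so there is nothing to add.
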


\begin{remark}
In \cite{Nikiforov2014:Analytic Methods}, Nikiforov provides an example showing 
that $\lambda^{(p)}(G)$ may not be differentiable at $p=2,3,\ldots, r$. He asked 
whether $\lambda^{(p)}(G)$ is continuously differentiable on $(r,\infty)$. This 
corollary give a weak solution toward this problem.
\end{remark}

\begin{theorem}\label{convex2}
For any $r$-uniform hypergraph $G$ and $p>r$, the function $\log \lambda^{(p)}(G)$ 
is concave upward in $1/p$.
\end{theorem}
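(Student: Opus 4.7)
The plan is to mimic the proof of \autoref{convex}, but replace the arithmetic means of the weighted incidence matrices by geometric means; the appropriate geometric interpolation produces $\alpha$-subnormality with a target $\alpha$ reflecting convexity in $1/p$ rather than in $p$. Fix $r < p_1 < p < p_2$ and, using \autoref{lem:iff}, pick consistent $\alpha_i$-normal labelings $(B_i,\{w_i(e)\})$ for $\lambda^{(p_i)}(G)$, where $\alpha_i = r^{p_i-r}/(\lambda^{(p_i)}(G))^{p_i}$ for $i = 1, 2$. I would then define
\[
B(v,e) := B_1(v,e)^{s}\, B_2(v,e)^{1-s}, \qquad w(e) := w_1(e)^{b}\, w_2(e)^{1-b},
\]
with exponents $s, b \in (0,1)$ chosen so that $(B, \{w(e)\})$ is $\alpha$-subnormal for $\lambda^{(p)}(G)$ with $\alpha = \alpha_1^{s}\, \alpha_2^{1-s}$.

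The exponents are forced by condition (3) of \autoref{def:subnormal}: matching powers of $w_i$ and $B_i$ separately in the relation $w(e)^{p-r}\prod_{v\in e}B(v,e) = \alpha_1^{s}\alpha_2^{1-s}$ yields $b(p-r) = s(p_1-r)$ and $(1-b)(p-r) = (1-s)(p_2-r)$, whose sum gives $sp_1 + (1-s)p_2 = p$, hence $s = (p_2-p)/(p_2-p_1)$ and $b = s(p_1-r)/(p-r)$, both in $(0,1)$. Conditions (1) and (2) will follow immediately from H\"older's inequality: $\sum_{e} w_1(e)^{b} w_2(e)^{1-b} \leq (\sum w_1)^{b}(\sum w_2)^{1-b} = 1$ and similarly $\sum_{e\ni v} B_1(v,e)^{s} B_2(v,e)^{1-s} \leq 1$; condition (3) then holds with equality by construction. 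Applying \autoref{lem:subnormal} gives $\lambda^{(p)}(G) \leq r^{1-r/p}(\alpha_1^{s}\alpha_2^{1-s})^{-1/p}$, and substituting $\alpha_i = r^{p_i-r}/(\lambda^{(p_i)}(G))^{p_i}$ while using $sp_1+(1-s)p_2 = p$ cancels the $r$-exponents, leaving $\lambda^{(p)}(G) \leq \lambda^{(p_1)}(G)^{sp_1/p}\lambda^{(p_2)}(G)^{(1-s)p_2/p}$.

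Finally, setting $\theta := sp_1/p$ gives $1-\theta = (1-s)p_2/p$ and a short computation yields $\theta/p_1 + (1-\theta)/p_2 = 1/p$, so $\log\lambda^{(p)}(G) \leq \theta\log\lambda^{(p_1)}(G) + (1-\theta)\log\lambda^{(p_2)}(G)$ is precisely convexity of $\log\lambda^{(p)}(G)$ in $1/p$. The main obstacle is selecting the two geometric-mean exponents $s$ and $b$ consistently: they must simultaneously match condition (3) against the target $\alpha_1^{s}\alpha_2^{1-s}$ and make the final inequality express convexity in $1/p$ rather than in $p$. Unlike \autoref{convex}, where arithmetic means of the $B_i, w_i$ combined with AM-GM on condition (3) delivered $\alpha$-subnormality, here geometric means force (3) to hold with equality while H\"older's inequality supplies the required $\leq 1$ bounds in (1) and (2).
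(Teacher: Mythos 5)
Your proof is correct and is essentially the paper's own argument: your exponents $s$ and $b$ coincide with the paper's $\eta$ and $\xi$, your target $\alpha=\alpha_1^{s}\alpha_2^{1-s}$ is the paper's $\alpha$, and both arguments conclude by applying \autoref{lem:subnormal} and the same final algebra. The only (cosmetic) difference is that the paper interpolates $B_1,B_2$ and $w_1,w_2$ arithmetically, so conditions (1)--(2) of \autoref{def:subnormal} hold with equality and (3) follows from weighted AM--GM, whereas you interpolate geometrically, so (3) holds with equality and (1)--(2) follow from H\"older --- a dual but equivalent bookkeeping.
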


\begin{proof}
For any $p_i>r$, $i=1$, $2$, write 
\[
\frac{1}{p}=\frac{\mu}{p_1} + \frac{1-\mu}{p_2},\ \text{where}\
\mu=\frac{p_1(p_2-p)}{p(p_2-p_1)}.
\]
Let $G$ be consistently $\alpha_i$-normal with weighted 
incident matrix $B_i$ and $\{w_i(e)\}$ for $\lambda^{(p_i)}(G)$, where 
$\alpha_i=r^{p_i-r}/(\lambda^{(p_i)}(G))^{p_i}$, $i=1$, $2$. Therefore
\[
\begin{dcases}
\sum_{e\in E(G)}w_i(e)=1,\\
\sum_{e:\,v\in e}B_i(v,e)=1,\ \text{for any}\ v\in V(G),\\
w_i(e)^{1-r/p_i}\prod_{v\in e}(B_i(v,e))^{1/p_i}=\alpha_i^{1/p_i},\ 
\text{for any}\ e\in E(G).
\end{dcases}
\]
Furthermore, let 
\[
\eta=\frac{p}{p_1}\mu,~\xi=\frac{p(p_1-r)}{p_1(p-r)}\mu.
\] 
We define a weighted incidence 
matrix $B$ and $\{w(e)\}$ for $\lambda^{(p)}(G)$ as follows:
\begin{align*}
  B(v,e) & =\eta B_1(v,e)+(1-\eta) B_2(v,e),\\
  w(e)   & =\xi w_1(e) +(1-\xi)w_2(e).
\end{align*}
In the following we shall prove $\{B(v,e)\}$ and $\{w(e)\}$ are 
$\alpha$-subnormal labeling for $\lambda^{(p)}(G)$, where 
$\alpha^{1/p}=\alpha_1^{\mu/p_1}\alpha_2^{(1-\mu)/p_2}$. 
It is clear that $\sum_{e:\,v\in e} B(v,e)=1$ for any 
$v\in V(G)$, and $\sum_{e\in E(G)}w(e)=1$. For any 
edge $e\in E(G)$, we have
\begin{align*}
\bigg(w(e)^{p-r}\prod_{v\in e}B(v,e)\bigg)^{1/p}
& \!\geq\! \bigg(w_1(e)^{\xi(p-r)}w_2(e)^{(1-\xi)(p-r)}\prod_{v\in e}%
  (B_1(v,e))^{\eta} (B_2(v,e))^{1-\eta}\bigg)^{1/p}\\
& \!=\! \bigg(\!w_1(e)^{p_1-r}\prod_{v\in e} B_1(v,e)\!\bigg)^{\mu/p_1}\!
    \bigg(\!w_2(e)^{p_2-r}\prod_{v\in e} B_2(v,e)\!\bigg)^{(1-\mu)/p_2}\\
& =\alpha_1^{\mu/p_1}\alpha_2^{(1-\mu)/p_2}.
\end{align*}
By \autoref{lem:subnormal}, we have
\begin{align*}
\log \lambda^{(p)}(G)
& \leq \bigg(1-\frac{r}{p}\bigg)\log r-\frac{1}{p}\log\alpha\\
& =\mu\log \lambda^{(p_1)}(G)+(1-\mu)\log\lambda^{(p_2)}(G).
\end{align*}
Thus the function $\log \lambda^{(p)}(G)$ is concave upward in $1/p$.
\end{proof}

Now we consider the applications of the $\alpha$-normal labeling method in the study 
of the products of hypergraphs. Let $G_i$ be $r_i$-uniform hypergraph, $i=1$, $2$, 
with $V(G_1)\cap V(G_2)=\emptyset$. Define an $(r_1+r_2)$-uniform hypergraph $G_1*G_2$ 
by
\[
V(G_1*G_2)=V(G_1)\cup V(G_2),~
E(G_1*G_2)=\{e\cup f\,|\,e\in E(G_1), f\in E(G_2)\}.   
\]
In \cite{Nikiforov2014:Analytic Methods}, Nikiforov investigated the $p$-spectral radius 
of $G_1*tK_1$ and $G_1*K_t^t$ for $p\geq 1$, where $K_t^t$ is a $t$-uniform hypergraph 
with only one edge. In the following, we give a generalized result for large $p$.

\begin{theorem}\label{thm:G1*G2}
Suppose that $G_i$ is an $r_i$-uniform hypergraph, $i=1$, $2$. If $p>r_1+r_2$, then
\[
\lambda^{(p)}(G_1* G_2)=\frac{(r_1+r_2)^{1-(r_1+r_2)/p}}{r_1^{1-r_1/p}r_2^{1-r_2/p}}
\lambda^{(p)}(G_1)\lambda^{(p)}(G_2).    
\]
\end{theorem}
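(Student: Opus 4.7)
The plan is to apply \autoref{lem:iff} constructively: I aim to build a consistent $\alpha$-normal labeling of $G_1 * G_2$ directly from the consistent $\alpha_i$-normal labelings $(B_i,\{w_i(e)\})$ of $G_i$ that \autoref{lem:iff} produces for the $p$-spectral radius, where $\alpha_i=r_i^{p-r_i}/(\lambda^{(p)}(G_i))^p$. Since $p>r_1+r_2>r_i$, such labelings exist for both factors, and the target object also exists for the $(r_1+r_2)$-uniform hypergraph $G_1*G_2$. The product structure of the edge set $\{e\cup f : e\in E(G_1),\, f\in E(G_2)\}$ meshes so tightly with the multiplicative form of the $\alpha$-normal conditions that the right ansatz almost writes itself.

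Concretely, for each edge $e\cup f$ I would take $w(e\cup f):= w_1(e)w_2(f)$, and on corners I would tensor the native corner label with the foreign edge weight:
\[
B(v,e\cup f):=\begin{cases} B_1(v,e)\,w_2(f), & v\in V(G_1),\\ w_1(e)\,B_2(v,f), & v\in V(G_2). \end{cases}
\]
With this ansatz the vertex-sum condition factors as $\bigl(\sum_{e\ni v}B_1(v,e)\bigr)\bigl(\sum_f w_2(f)\bigr)=1\cdot 1$ for $v\in V(G_1)$ (symmetrically for $v\in V(G_2)$), and the total edge weight factors as $\bigl(\sum_e w_1(e)\bigr)\bigl(\sum_f w_2(f)\bigr)=1$, so conditions (1) and (2) of \autoref{def:consistent normal} are immediate.

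The single substantive computation is condition (3). Grouping the $r_1$ corners contributed by vertices of $e$ and the $r_2$ corners from vertices of $f$ gives
\[
\prod_{v\in e\cup f}B(v,e\cup f) = w_2(f)^{r_1}w_1(e)^{r_2}\prod_{v\in e}B_1(v,e)\prod_{v\in f}B_2(v,f),
\]
and multiplying by $w(e\cup f)^{p-(r_1+r_2)}=(w_1(e)w_2(f))^{p-r_1-r_2}$ regroups the exponents so that $w_1(e)$ accumulates to power $p-r_1$ and $w_2(f)$ to power $p-r_2$; the product therefore telescopes into $\bigl(w_1(e)^{p-r_1}\prod_{v\in e}B_1(v,e)\bigr)\bigl(w_2(f)^{p-r_2}\prod_{v\in f}B_2(v,f)\bigr)=\alpha_1\alpha_2$, independent of $e$ and $f$. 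Consistency is equally transparent: for $v\in V(G_1)$ the ratio $w(e\cup f)/B(v,e\cup f)=w_1(e)/B_1(v,e)$ is independent of $f$ trivially and independent of $e\ni v$ by the consistency of $(B_1,w_1)$; the case $v\in V(G_2)$ is symmetric.

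Having produced a consistent $\alpha_1\alpha_2$-normal labeling of $G_1*G_2$, \autoref{lem:iff} applied in the reverse direction yields
\[
\lambda^{(p)}(G_1*G_2)=\frac{(r_1+r_2)^{1-(r_1+r_2)/p}}{(\alpha_1\alpha_2)^{1/p}},
\]
and substituting $\alpha_i^{1/p}=r_i^{(p-r_i)/p}/\lambda^{(p)}(G_i)$ delivers the stated identity. I do not anticipate any real obstacle: the matching of multiplicative structures is so tight that the only place needing care is bookkeeping the exponents of $w_1(e)$ and $w_2(f)$ in condition (3).
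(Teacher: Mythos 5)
Your proposal is correct and matches the paper's proof essentially verbatim: the same tensored labeling $w(e\cup f)=w_1(e)w_2(f)$, $B(v,e\cup f)=B_1(v,e)w_2(f)$ or $w_1(e)B_2(v,f)$, the same exponent bookkeeping giving $\alpha=\alpha_1\alpha_2$, and the same appeal to \autoref{lem:iff} in both directions. Nothing to add.
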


\begin{proof}
According to \autoref{lem:iff}, let $G_i$ be consistently $\alpha_i$-normal with 
weighted incident matrix $B_i$ and $\{w_i(e)\}$ for $\lambda^{(p)}(G_i)$, where 
$\alpha_i=r_i^{p-r_i}/(\lambda^{(p)}(G_i))^p$, $i=1$, $2$. That is
\[
\begin{dcases}
\sum_{e\in E(G_i)}w_i(e)=1,\\
\sum_{e\in E(G_i):\,v\in e}B_i(v,e)=1,\ \text{for any}\ v\in V(G_i),\\
w_i(e)^{p-r_i}\prod_{v\in e}B_i(v,e)=\alpha_i,\ \text{for any}\ e\in E(G_i).
\end{dcases}
\]
For any $v\in V(G_1*G_2)$ and $e\cup f\in E(G_1*G_2)$, we define a weighted incidence 
matrix $B$ and $\{w(e\cup f)\}$ for $G_1*G_2$ as follows:
\begin{align}\label{eq:B(v,e+f)}
B(v,e\cup f) & =
    \begin{cases}
    B_1(v,e)\cdot w_2(f), & \text{if}\ v\in e,\\
    B_2(v,f)\cdot w_1(e), & \text{if}\ v\in f,\\
    0, & \text{otherwise}, 
    \end{cases}\\
w(e\cup f) & =w_1(e)\cdot w_2(f). \label{eq:c(e+f)}    
\end{align}
In the following we shall prove that $\{B(v,e\cup f)\}$ and $\{w(e\cup f)\}$ are
consistent $\alpha$-normal labeling of $G_1*G_2$ with $\alpha=\alpha_1\alpha_2$. 

(i). By \eqref{eq:c(e+f)} we have
\[
\sum_{e\,\cup f\in E(G_1*G_2)}w(e\cup f)=\Bigg(\sum_{e\in E(G_1)}w_1(e)\Bigg)
\Bigg(\sum_{f\in E(G_2)}w_2(f)\Bigg)=1.    
\]

(ii). For any $v\in V(G_1*G_2)$, if $v\in V(G_1)$ we have
\begin{align*}
\sum_{e\,\cup f:\,v\in e\,\cup f}B(v,e\cup f) & =\sum_{e\,\cup f:\,v\in e\,\cup f}B_1(v,e)w_2(f)\\
& =\sum_{e\in E(G_1):\,v\in e}\sum_{f\in E(G_2)}B_1(v,e)w_2(f)\\
& =\Bigg(\sum_{e\in E(G_1):\,v\in e}B_1(v,e)\Bigg)\Bigg(\sum_{f\in E(G_2)}w_2(f)\Bigg)\\
& =1.    
\end{align*}
If $v\in V(G_2)$, we can prove $\sum_{e\,\cup f:\,v\in e\,\cup f}B(v,e\cup f)=1$ similarly.

(iii). For any edge $e\cup f\in E(G_1*G_2)$, it follows from \eqref{eq:B(v,e+f)}
and \eqref{eq:c(e+f)} that
\begin{align*}
&~w(e\cup f)^{p-(r_1+r_2)}\cdot\prod_{v\in e\,\cup f}B(v,e\cup f)\\ 
= &~\big(w_1(e)w_2(f)\big)^{p-(r_1+r_2)}\cdot\prod_{v\in e}B(v,e\cup f)\prod_{u\in f}B(u,e\cup f)\\
= &~w_1(e)^{p-r_1}\prod_{v\in e}B_1(v,e)\cdot w_2(f)^{p-r_2}\prod_{u\in f}B_2(u,f)\\
= &~\alpha_1\alpha_2.
\end{align*}
Finally, consider each vertex $v\in V(G_1*G_2)$ and $v\in e\cup f\in E(G_1*G_2)$. 
By \eqref{eq:B(v,e+f)} and \eqref{eq:c(e+f)} we see that
\[
\frac{w(e\cup f)}{B(v,e\cup f)}=
\begin{dcases}
\frac{w_1(e)}{B_1(v,e)}, & \text{if}\ v\in V(G_1),\\[1mm]
\frac{w_2(f)}{B_2(v,f)}, & \text{if}\ v\in V(G_2).
\end{dcases}    
\]
Hence, $G_1*G_2$ is consistently $\alpha$-normal with $\alpha=\alpha_1\alpha_2$.
Using \autoref{lem:iff} gives
\begin{align*}
\lambda^{(p)}(G_1*G_2) & =\frac{(r_1+r_2)^{1-(r_1+r_2)/p}}{(\alpha_1\alpha_2)^{1/p}}\\
& =\frac{(r_1+r_2)^{1-(r_1+r_2)/p}}{r_1^{1-r_1/p}r_2^{1-r_2/p}}
\lambda^{(p)}(G_1)\lambda^{(p)}(G_2).     
\end{align*}
The proof is completed.
\end{proof}

Let $G_1$ and $G_2$ be two $r$-uniform hypergraphs. The {\em direct product} $G_1\times G_2$ 
of $G_1$ and $G_2$ is defined as $V(G_1\times G_2)=V(G_1)\times V(G_2)$, and 
$\{(i_1,j_1),\ldots,(i_r,j_r)\}\in E(G_1\times G_2)$ if and only if 
$\{i_1,\ldots,i_r\}\in E(G_1)$ and $\{j_1,\ldots,j_r\}\in E(G_2)$. For 
an edge $f=\{(i_1,j_1),\ldots,(i_r,j_r)\}\in E(G_1\times G_2)$, we denote
$\pi_1(f):=\{i_1,\ldots,i_r\}\in E(G_1)$ and $\pi_2(f):=\{j_1,\ldots,j_r\}\in E(G_2)$.

In what follows, we give an extension to a result of Shao \cite{Shao2013}.   

\begin{theorem} \label{product}
Let $G_1$ and $G_2$ be two $r$-uniform hypergraphs. If $p>r$, then
\[
\lambda^{(p)}(G_1\times G_2)=(r-1)!\lambda^{(p)}(G_1)\lambda^{(p)}(G_2).    
\]
\end{theorem}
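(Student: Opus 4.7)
The plan is to mimic the construction used in \autoref{thm:G1*G2}: start from consistent $\alpha_i$-normal labelings of $G_1$ and $G_2$ guaranteed by \autoref{lem:iff}, and build from them a consistent $\alpha$-normal labeling of $G_1\times G_2$ whose value of $\alpha$ produces the desired $p$-spectral radius. The crucial bookkeeping is the combinatorial correspondence between edges of $G_1\times G_2$ and pairs $(e,e')\in E(G_1)\times E(G_2)$: each such pair yields exactly $r!$ edges of $G_1\times G_2$ (one per bijection $\sigma\colon e\to e'$), and a fixed vertex $(i,j)$ with $i\in e$, $j\in e'$ belongs to exactly $(r-1)!$ of them.

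For each edge $f\in E(G_1\times G_2)$ with $\pi_1(f)=e$ and $\pi_2(f)=e'$, and for each $(i,j)\in f$, I would set
\[
w(f)=\frac{w_1(e)\,w_2(e')}{r!},\qquad
B\bigl((i,j),f\bigr)=\frac{B_1(i,e)\,B_2(j,e')}{(r-1)!}.
\]
The three $\alpha$-normal conditions then follow directly from the two counts above: condition (1) uses that each pair $(e,e')$ contributes $r!$ edges of weight $w_1(e)w_2(e')/r!$, so the total collapses to $\bigl(\sum w_1\bigr)\bigl(\sum w_2\bigr)=1$; condition (2) uses that $(i,j)$ appears in exactly $(r-1)!$ edges for every admissible $(e,e')$, which cancels the $(r-1)!$ in the denominator; and condition (3) separates across the two factors because each edge $f$ visits the vertices of $e$ (resp.\ $e'$) through its first (resp.\ second) coordinate exactly once. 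Consistency follows since $w(f)/B((i,j),f)=\frac{1}{r}\cdot\frac{w_1(e)}{B_1(i,e)}\cdot\frac{w_2(e')}{B_2(j,e')}$, and each of the last two ratios is independent of the chosen edge by consistency of the component labelings.

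A short calculation then gives
\[
w(f)^{p-r}\prod_{(i,j)\in f}B\bigl((i,j),f\bigr)
=\frac{\alpha_1\alpha_2}{(r!)^{p-r}\,((r-1)!)^{r}},
\]
so $G_1\times G_2$ is consistently $\alpha$-normal with $\alpha=\alpha_1\alpha_2/\bigl((r!)^{p-r}((r-1)!)^r\bigr)$. Applying \autoref{lem:iff} and using $r!=r\,(r-1)!$ to simplify yields $\lambda^{(p)}(G_1\times G_2)^p=((r-1)!)^p\,\lambda^{(p)}(G_1)^p\lambda^{(p)}(G_2)^p$, which is the claim.

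The main potential pitfall is bookkeeping with the factors $r!$ and $(r-1)!$: one must verify that the two combinatorial multiplicities (edges per pair, and edges per vertex per pair) exactly balance the scaling chosen for $w$ and $B$ so that all three normalization conditions hold simultaneously. Once these are set correctly, the remainder is an algebraic simplification using $(r!)^{p-r}((r-1)!)^r=((r-1)!)^p\,r^{p-r}$, which matches the $r^{p-r}$ appearing in $\alpha_i=r^{p-r}/\lambda^{(p)}(G_i)^p$ and delivers the clean factor $(r-1)!$ in the final identity.
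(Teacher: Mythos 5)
Your proposal is correct and follows essentially the same route as the paper: the identical labeling $w(f)=w_1(\pi_1(f))w_2(\pi_2(f))/r!$ and $B((i,j),f)=B_1(i,\pi_1(f))B_2(j,\pi_2(f))/(r-1)!$, the same $r!$ and $(r-1)!$ multiplicity counts to verify the normalization and consistency conditions, and the same value of $\alpha$ (your $\alpha_1\alpha_2/\bigl((r!)^{p-r}((r-1)!)^r\bigr)$ equals the paper's $r^r\alpha_1\alpha_2/(r!)^p$), followed by \autoref{lem:iff}. The only difference is presentational: you make the two combinatorial multiplicities explicit up front, which the paper uses implicitly in its summation manipulations.
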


\begin{proof}
By \autoref{lem:iff}, let $\{B_i(v,e)\}$ and $\{w_i(e)\}$ be the consistent 
$\alpha_i$-normal labeling of $G_i$ with $\alpha_i=r^{p-r}/(\lambda^{(p)}(G_i))^p$, 
$i=1$, $2$. 

Define a weighted incident matrix $B$ and $\{w(f)\}$ for $G_1\times G_2$ as follows:
\begin{align}\label{eq:B((u,v))}
B((u,v),f) & =
\begin{dcases}
\frac{B_1(u,\pi_1(f))B_2(v,\pi_2(f))}{(r-1)!}, & \text{if}\ (u,v)\in f,\\
0, & \text{otherwise},
\end{dcases}\\[2mm]\label{eq:w1w2}
w(f) & =\frac{w_1(\pi_1(f))w_2(\pi_2(f))}{r!}. 
\end{align}  
In what follows, we shall prove that $\{B((u,v),f)\}$ and $\{w(f)\}$ are consistent 
$\alpha$-normal labeling of $G_1\times G_2$ with $\alpha=r^r\alpha_1\alpha_2/(r!)^p$.

(i). By \eqref{eq:w1w2} we deduce that
\begin{align*}
\sum_{f\in E(G_1\times G_2)}w(f) 
& =\frac{1}{r!}\sum_{f\in E(G_1\times G_2)}w_1(\pi_1(f))w_2(\pi_2(f))\\
& =\sum_{e_1\in E(G_1)}\sum_{e_2\in E(G_2)}w_1(e_1)w_2(e_2)\\
& =1.    
\end{align*}

(ii). For any $(u,v)\in V(G_1\times G_2)$, we have
\begin{align*}
\sum_{f:\,(u,v)\in f}B((u,v),f) & =
\frac{1}{(r-1)!}\sum_{f:\,(u,v)\in f}B_1(u,\pi_1(f))B_2(v,\pi_2(f))\\
& =\sum_{e_1:\,u\in e_1}\sum_{e_2:\,v\in e_2}B_1(u,e_1)B_2(v,e_2)\\
& =1.    
\end{align*} 

(iii). For each edge $f\in E(G_1\times G_2)$, we obtain that
\begin{align*}
&~w(f)^{p-r}\prod_{(u,v)\in f}B((u,v),f)\\
= &~\frac{[w_1(\pi_1(f))w_2(\pi_2(f))]^{p-r}}{(r!)^{p-r}}\cdot
\prod_{(u,v)\in f}\frac{B_1(u,\pi_1(f))B_2(v,\pi_2(f))}{(r-1)!}\\
= &~\frac{r^r\alpha_1\alpha_2}{(r!)^p}.
\end{align*}
Finally, consider each vertex $(u,v)\in V(G_1)\times V(G_2)$ and 
$(u,v)\in f$. It follows from \eqref{eq:B((u,v))} and
\eqref{eq:w1w2} that
\[
\frac{w(f)}{B((u,v),f)}=
\frac{w_1(\pi_1(f))}{rB_1(u,\pi_1(f))}\cdot\frac{w_2(\pi_2(f))}{B_2(v,\pi_2(f))}.
\]
Therefore $G_1\times G_2$ is consistently $\alpha$-normal with
$\alpha=r^r\alpha_1\alpha_2/(r!)^p$. Hence
\[
\lambda^{(p)}(G_1\times G_2)=\frac{r^{1-r/p}}{\alpha^{1/p}}
=(r-1)!\lambda^{(p)}(G_1)\lambda^{(p)}(G_2).
\]
The proof is completed.
\end{proof}

For a given $r$-uniform hypergraph $G$, we let $G^{r+1}$ be an $(r+1)$-uniform 
hypergraph obtained by adding a new vertex $v_e$ in each edge $e$ of $G$ such 
that all these new vertices are pairwise disjoint. Following \cite{KangLiu2016}, 
$G^{r+1}$ is a {\em generalized power} of $G$. If we do not require that all $v_e$
to be distinct, we get an {\em extension} of $G$. Denote $\mathcal{E}(G)$ the set 
of all extensions of $G$. 

\begin{theorem} \label{extension}
Let $G$ be an $r$-uniform hypergraph with no isolated vertices, and $H$ be an 
extension of $G$. If $p>r$, then 
\[
\Big(\Big(\frac{r+1}{r}\Big)^{p-r}\big(\lambda^{(p)}(G)\big)^p\Big)^{1/(p+1)}
\leq\lambda^{(p+1)}(H)\leq\left(\frac{(r+1)^{p-r}}{r^{p+1-r}}\right)^{1/(p+1)}\lambda^{(p+1)}(G),   
\]
with the left equality holds if and only if $H\cong G^{r+1}$, and the right
equality holds if and only if $H\cong G*K_1$.
\end{theorem}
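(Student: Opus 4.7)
The plan is to prove both bounds by constructing labelings of $H$ from consistent $\alpha$-normal labelings of $G$ (via \autoref{lem:iff}), and then invoking \autoref{lem:subnormal} and \autoref{lem:supernormal}. The pivotal observation is that the two bounds use \emph{different} exponents on $G$: the lower bound starts from the $\lambda^{(p)}(G)$-labeling, whereas the upper bound starts from the $\lambda^{(p+1)}(G)$-labeling. In each case the value $B'(u,e')$ at a new vertex $u$ is chosen to make the relevant normality condition precise.

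For the lower bound, let $(B,w)$ be a consistent $\alpha_1$-normal labeling of $G$ with $\alpha_1 = r^{p-r}/\lambda^{(p)}(G)^p$. Each edge of $H$ has the form $e'=e\cup\{u\}$ for some $e\in E(G)$ and some new vertex $u$; for each such $u$, let $E_u=\{e\in E(G): v_e=u\}$ and $c_u=\sum_{e\in E_u}w(e)$. Set
\[
w'(e'):=w(e),\quad B'(v,e'):=B(v,e)\ \text{for}\ v\in e,\quad B'(u,e'):=\frac{w(e)}{c_u}.
\]
A direct verification yields $\sum_{e'}w'(e')=1$, $\sum_{e'\ni v}B'(v,e')=1$ at every vertex of $H$, and
\[
w'(e')^{p-r}\prod_{v\in e'}B'(v,e')=\alpha_1\cdot\frac{w(e)}{c_u}\leq\alpha_1,
\]
so $H$ is $\alpha_1$-supernormal. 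Consistency is inherited at vertices of $G$ (since $w'(e')/B'(v,e')=w(e)/B(v,e)$) and equals $c_u$ at each new vertex. \autoref{lem:supernormal} then gives the left-hand inequality; it is strict unless $w(e)=c_u$ for every edge, i.e., $|E_u|=1$ for all $u$, equivalently $H\cong G^{r+1}$.

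For the upper bound, let $(B,w)$ instead be a consistent $\alpha_2$-normal labeling of $G$ at exponent $p+1$, so $\alpha_2=r^{p+1-r}/\lambda^{(p+1)}(G)^{p+1}$. For each edge $e'=e\cup\{u\}$ of $H$ define
\[
w'(e'):=w(e),\quad B'(v,e'):=B(v,e)\ \text{for}\ v\in e,\quad B'(u,e'):=w(e).
\]
Then $\sum_{e'}w'(e')=1$, $\sum_{e'\ni v}B'(v,e')=1$ for $v\in V(G)$, $\sum_{e'\ni u}B'(u,e')=\sum_{e\in E_u}w(e)\leq 1$ at every new vertex, and
\[
w'(e')^{p-r}\prod_{v\in e'}B'(v,e')=w(e)^{p+1-r}\prod_{v\in e}B(v,e)=\alpha_2
\]
for every edge. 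Hence $H$ is $\alpha_2$-subnormal, and \autoref{lem:subnormal} delivers the right-hand inequality. The only possible slack sits at the new vertices, so equality forces $\sum_{e\in E_u}w(e)=1$ for every $u$; by positivity of the weights this can only happen when all edges share a single new vertex, i.e.\ $H\cong G*K_1$. Conversely, for $H\cong G*K_1$ the labeling above is consistently $\alpha_2$-normal, so \autoref{lem:iff} gives equality (this also follows from \autoref{thm:G1*G2}).

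The main subtlety is recognising that the two halves of the inequality require different exponents on $G$: feeding the wrong labeling into either side produces the wrong $\alpha$, so the constructions must be matched with the correct starting data. Once this is in place, the two equality analyses reduce to the parallel observation that the only slack in the supernormal labeling is $w(e)/c_u<1$ and the only slack in the subnormal labeling is $\sum_{e\in E_u}w(e)<1$; these disappear, respectively, at the two opposite extremes of how new vertices can coincide.
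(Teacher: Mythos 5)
Your two labeling constructions are correct and they establish both inequalities by a route genuinely different from (and more self-contained than) the paper's: the paper proves the lower bound by an eigenvector-merging argument that moves any extension step by step toward $G^{r+1}$, each step strictly decreasing $\lambda^{(p+1)}$, and then computes $\lambda^{(p+1)}(G^{r+1})$ by a normal labeling, while the upper bound is only sketched ("similarly") and finished via \autoref{thm:G1*G2}. Your idea of labeling an arbitrary extension $H$ directly — supernormally from the $\lambda^{(p)}(G)$-labeling with $B'(u,e')=w(e)/c_u$ at new vertices, subnormally from the $\lambda^{(p+1)}(G)$-labeling with $B'(u,e')=w(e)$ — is clean, the exponent bookkeeping ($(p+1)-(r+1)=p-r$) checks out, consistency holds where needed, and \autoref{lem:supernormal} and \autoref{lem:subnormal} give exactly the two stated bounds; the "if" directions of both equality statements also follow as you say.

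The gap is in the "only if" directions of the equality characterizations, which is precisely where the paper does its real work. You assert that slack in \emph{your particular} labeling ($w(e)/c_u<1$ for some edge, resp.\ $\sum_{e\in E_u}w(e)<1$ at some new vertex) forces strict inequality in the spectral bound, but neither \autoref{lem:subnormal} nor \autoref{lem:supernormal} says this: their strictness clauses concern "strictly sub/supernormal", defined as admitting a sub/supernormal labeling while admitting \emph{no} $\alpha$-normal labeling, and you have not shown that $H$ admits no such labeling when $H\not\cong G^{r+1}$ (resp.\ $H\not\cong G*K_1$); a priori equality could be attained through a different labeling than the one you built. The claim is true and fixable, but it needs an argument: for the lower bound, rerun the chain in the proof of \autoref{lem:supernormal} with the explicit unit vector $x_v=\big(w'(e')/((r+1)B'(v,e'))\big)^{1/(p+1)}$ furnished by consistency — every term is positive, so slack in condition (3) at any single edge already gives $\lambda^{(p+1)}(H)\geq P_H(\bm{x})>$ the bound; for the upper bound, evaluate the chain in the proof of \autoref{lem:subnormal} at the strictly positive Perron eigenvector of $H$ (which exists by \autoref{thm:p>r}, since $p+1>r+1$ and $H$ has no isolated vertices), so that slack in condition (2) at any new vertex makes the final inequality strict. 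With those two observations inserted, your proof is complete; without them, the equality cases are only claimed, not proved.
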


\begin{proof}
We first prove the left inequality. Let $\mathcal{E}_i(G)$ be the subset of 
$\mathcal{E}(G)$ in which each member $H$ has exactly $i$ vertices of degree 
one in $V(H)\backslash V(G)$. Clearly,
\[
\mathcal{E}(G)=\bigcup_{i=0}^{m}\mathcal{E}_i(G),    
\]
where $m$ is the size of $G$. Let $H\in\mathcal{E}_i(G)$. If $i=m$, then 
$H\cong G^{r+1}$. If $i\leq m-1$, we claim that there is an extension 
$H'\in\mathcal{E}_{i+1}(G)$ such that $\lambda^{(p+1)}(H')<\lambda^{(p+1)}(H)$. 
Choose a vertex $v_0\in V(H)\backslash V(G)$ with degree great than one, and an 
edge $e_0\in E(H)$ such that $v_0\in e_0$. Let $H'$ be the extension of $G$ with 
$V(H')=V(H)\cup\{u_0\}$ and $E(H')=(E(H)\backslash e_0)\cup ((e_0\backslash\{v_0\})\cup\{u_0\})$, 
where $u_0\notin V(H)$ is a new vertex. Assume $\bm{x}$ is the positive 
eigenvector with $||\bm{x}||_{p+1}=1$ corresponding to $\lambda^{(p+1)}(H')$, 
we define a vector $\bm{y}$ for $H$ as follows:
\[
y_v=\begin{cases}
    x_v, & v\neq v_0,\\[2mm]
    \sqrt[\uproot{5}p+1]{x_{u_0}^{p+1}+x_{v_0}^{p+1}}, & v=v_0.
    \end{cases}    
\]
It follows that
\begin{align*}
\lambda^{(p+1)}(H)-\lambda^{(p+1)}(H') 
& \geq r\sum_{e\in E(H)}\prod_{v\in e}y_v-r\sum_{e\in E(H')}\prod_{v\in e}x_v\\
& =r(y_{v_0}-x_{u_0})\prod_{v\in e_0\backslash\{v_0\}}x_v+
r(y_{v_0}-x_{v_0})\sum_{\substack{e\in E(H)\backslash e_0,\\ v_0\in e}}
\prod_{v\in e\backslash\{v_0\}}x_v\\
& >0,
\end{align*}
which yields $\lambda^{(p+1)}(H)>\lambda^{(p+1)}(H')$. Therefore
$\lambda^{(p+1)}(H)\geq\lambda^{(p+1)}(G^{r+1})$, with equality 
if and only if $H\cong G^{r+1}$. Now it suffices to show that
\[
\lambda^{(p+1)}(G^{r+1})=
\Big(\Big(\frac{r+1}{r}\Big)^{p-r}\big(\lambda^{(p)}(G)\big)^p\Big)^{1/(p+1)}.
\]
By \autoref{lem:iff}, let $G$ be consistently $\alpha$-normal with weighted 
incident matrix $B$ and $\{w(e)\}$, where $\alpha=r^{p-r}/(\lambda^{(p)}(G))^p$. 
That is
\[
\begin{dcases}
\sum_{e\in E(G)}w(e)=1,\\
\sum_{e:\,v\in e}B(v,e)=1,\ \text{for any}\ v\in V(G),\\
w(e)^{p-r}\prod_{v\in e}B(v,e)=\alpha,\ \text{for any}\ e\in E(G).
\end{dcases}
\]
We now define a weighted incident matrix $B'=(B'(v,e\cup\{v_e\}))$ and $\{w'(e\cup\{v_e\})\}$
for $G^{r+1}$ as follows:
\begin{align*}
B'(v,e\cup\{v_e\}) & =\begin{cases}
B(v,e), & \text{if}\ v\in e,\\
1, & \text{if}\ v=v_e,\\
0, & \text{otherwise},
\end{cases}\\
w'(e\cup\{v_e\}) & =w(e). 
\end{align*}
It can be checked that
\[
\sum_{e\,\cup\{v_e\}\in E(G^{r+1})}w'(e\cup\{v_e\})=1,~
\sum_{e\,\cup\{v_e\}:\,v\in e\,\cup\{v_e\}}B'(v,e\cup\{v_e\})=1,
\]
and for each edge $e\cup\{v_e\}\in E(G^{r+1})$, we have
\[
(w'(e\cup\{v_e\}))^{(p+1)-(r+1)}\prod_{v\in e\,\cup\{v_e\}}B'(v,e\cup\{v_e\})=\alpha.    
\]
Clearly, the weighted incidence matrix $B'$ and $\{w'(e\cup\{v_e\})\}$ are consistent. 
Using \autoref{lem:iff} gives
\[
\frac{r^{p-r}}{(\lambda^{(p)}(G))^p}
=\alpha=\frac{(r+1)^{p-r}}{\big(\lambda^{(p+1)}(G^{r+1})\big)^{p+1}},    
\]
as desired.

For the right inequality, we can prove $\lambda^{(p+1)}(H)\leq\lambda^{(p+1)}(G*K_1)$ similarly. 
According to \autoref{thm:G1*G2}, we have
\[
\lambda^{(p+1)}(G*K_1)=\left(\frac{(r+1)^{p-r}}{r^{p+1-r}}\right)^{1/(p+1)}\lambda^{(p+1)}(G), 
\]
the result follows. 
\end{proof}

\section{The $\alpha$-normal labeling method for $1\leq p<r$}

In this section, we make a brief discussion on the $\alpha$-normal labeling 
method for $1\leq p<r$. Due to the fact that the Perron--Frobenius Theorem 
fails for general hypergraph $G$ when $1\leq p<r$, the theory is less effective 
than the case $p\geq r$. However, we can still define the $\alpha$-normal 
labeling method as \autoref{def:consistent normal}.

Unlike the case $p> r$, neither the existence nor the uniqueness can be said for 
the $\alpha$-normal labeling for general $r$-uniform hypergraph $G$. However, we 
still have the following result.

\begin{theorem}\label{p<r}
For $1\leq p<r$, and any $r$-uniform hypergraph $G$ with $p$-spectral radius 
$\lambda^{(p)}(G)$, there exists an induced sub-hypergraph $G[S]$ such that 
$G[S]$ is consistently $\alpha$-normal with $\alpha=r^{p-r}/(\lambda^{(p)}(G))^p$.

Conversely, we have
\[
\lambda^{(p)}(G))=r^{1-r/p} \max_i\big\{\alpha_i^{-1/p}\big\},
\]
where the maximum is taken over all $\alpha_i$ such that there is a consistent 
$\alpha_i$-normal labeling on some induced sub-hypergraph of $G$.
\end{theorem}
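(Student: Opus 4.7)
The plan is to mimic the proof of \autoref{lem:iff}, restricted to the support of an extremal vector, so as to circumvent the failure of Perron--Frobenius for $p<r$. By compactness of $\mathbb{S}_{p,+}^{n-1}$ and continuity of $P_G$, I would first fix a nonnegative maximizer $\bm{x}$ with $P_G(\bm{x})=\lambda^{(p)}(G)$ and let $S=\{v\in V(G):x_v>0\}$. Since any edge $e\not\subseteq S$ contributes $0$ to $P_G(\bm{x})$, one has $P_{G[S]}(\bm{x}|_S)=P_G(\bm{x})$, and extending by zeros conversely shows $\lambda^{(p)}(G[S])=\lambda^{(p)}(G)$. In particular, $\bm{x}|_S\in\mathbb{S}_{p,++}^{|S|-1}$ is an interior maximizer on the positive simplex of $G[S]$, so Lagrange multipliers yield the eigenequation \eqref{eq:Eigenequation} at every $v\in S$, giving us a bona fide positive eigenvector of $G[S]$.

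With this positive eigenvector in hand, the definitions
\[
B(v,e):=\frac{\prod_{u\in e}x_u}{\lambda^{(p)}(G)\,x_v^p},\qquad w(e):=\frac{r\prod_{u\in e}x_u}{\lambda^{(p)}(G)},\qquad v\in e\in E(G[S]),
\]
replicate the construction in the forward direction of \autoref{lem:iff}. The three conditions of \autoref{def:consistent normal} together with the consistency relation $w(e)/B(v,e)=rx_v^p$ then follow by the same routine identities, with $\alpha=r^{p-r}/(\lambda^{(p)}(G))^p$; importantly, nothing in this step actually uses $p>r$.

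For the converse direction, given a consistently $\alpha$-normal labeling $(B,w)$ on some induced sub-hypergraph $G[S]$, I would define $x_v^*:=(w(e)/(rB(v,e)))^{1/p}$ for any edge $e\ni v$ (well-defined by consistency) and extend by $0$ outside $S$ to a vector $\tilde{\bm{x}}^*\in\mathbb{R}^n$. Direct use of $w(e)^{p-r}\prod_{v\in e}B(v,e)=\alpha$ gives $\prod_{v\in e}x_v^*=w(e)/(r^{r/p}\alpha^{1/p})$, while the double counting $\sum_{v\in S}(x_v^*)^p=\sum_{e\in E(G[S])}w(e)=1$ shows $\tilde{\bm{x}}^*\in\mathbb{S}_{p,+}^{n-1}$. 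Hence
\[
\lambda^{(p)}(G)\geq P_G(\tilde{\bm{x}}^*)=r\sum_{e\in E(G[S])}\prod_{v\in e}x_v^*=\frac{r^{1-r/p}}{\alpha^{1/p}},
\]
and taking the maximum over all such $(S,\alpha)$, combined with the first half, gives the stated equality.

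The main obstacle, relative to the $p>r$ case, is that the maximizer may vanish on part of $V(G)$; this is precisely why $\alpha$-normality can only be asserted on an induced sub-hypergraph $G[S]$, and why one has to verify carefully that the restriction $\bm{x}|_S$ still satisfies the positive eigenequation of $G[S]$ at every vertex of its support. One also cannot recycle the Hölder-based upper bound from \autoref{lem:iff}, since the exponent $p/(p-r)$ becomes negative for $p<r$---which is why the converse must be handled by directly evaluating $P_G(\tilde{\bm{x}}^*)$ rather than by invoking \autoref{lem:subnormal}.
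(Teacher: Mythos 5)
Your proposal is correct and follows essentially the same route as the paper: restrict a nonnegative maximizer to its support $S$, observe $\lambda^{(p)}(G[S])=\lambda^{(p)}(G)$, and reuse the eigenvector-based construction of $B$ and $w$ from \autoref{lem:iff} (which indeed never uses $p>r$), then for the converse build $x_v^*=(w(e)/(rB(v,e)))^{1/p}$ on $S$, extend by zeros, and evaluate $P_G$ directly to get $r^{1-r/p}\alpha_i^{-1/p}$ as a lower bound, taking the maximum. Your closing remarks about why the H\"older-based bound of \autoref{lem:subnormal} cannot be recycled for $p<r$ accurately reflect why the paper's converse is also argued by direct evaluation.
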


\begin{proof}
Assume that $P_G(\bm{x})$ reaches the maximum at
$\bm{x}^*=(x_1,x_2,\ldots,x_n)^{\mathrm{T}}\in\mathbb{S}^{n-1}_{p,+}$.
Let $S=\{i\colon x_i>0\}$. Consider the induced hypergraph $G[S]$.
Observe that $P_{G[S]}(\bm{x})$ reaches the maximum at 
$\bm{x}^*|_S\in \mathbb{S}^{|S|-1}_{p,++}$, and therefore 
$\lambda^{(p)}(G[S])=\lambda^{(p)}(G)$.

Define a weighted incidence matrix $B$ and $\{w(e)\}$ (on $G[S]$) as follows:
\begin{align*}
B(v,e) & =\begin{dcases}
\frac{\prod_{u\in e}x_u}{\lambda^{(p)}(G[S]) x_v^p}, & \text{if}~v\in e
~\text{and}~ v\in S,\, e\in E(G[S]),\\
0, & \text{otherwise},
\end{dcases}\\[2mm]
w(e) & =\frac{r\prod_{u\in e}x_u}{\lambda^{(p)}(G[S])}.
\end{align*}
Since $x_v\not=0$ for any $v\in S$, the above $B$ and $\{w(e)\}$ are well-defined
on $G[S]$.

For any $v\in S$, using the eigenequation \eqref{eq:Eigenequation} gives
\[
\sum_{e:\,v\in e}B(v,e)=
\frac{\sum_{e:\,v\in e}\prod_{u\in e}x_u}{\lambda^{(p)}(G[S]) x_v^p}=1.
\]
Also, we see that
\[
\sum_{e\in E(G[S])}w(e)=\frac{r}{\lambda^{(p)}(G[S])}\sum_{e\in E(G[S])}\prod_{u\in e}x_u
=\frac{\lambda^{(p)}(G[S])}{\lambda^{(p)}(G[S])}=1.
\]
Therefore items (1) and (2) of \autoref{def:consistent normal} are verified. For 
item (3), we check that 
\begin{align*}
w(e)^{p-r}\cdot\prod_{v\in e}B(v,e) & =
\Bigg(\frac{r}{\lambda^{(p)}(G[S])}\prod_{u\in e}x_u\Bigg)^{p-r}\cdot
\prod_{v\in e}\frac{\prod_{u\in e}x_u}{\lambda^{(p)}(G[S])x_v^p}\\
& =\frac{r^{p-r}}{(\lambda^{(p)}(G[S]))^p}=\alpha.
\end{align*}
To show that $B$ is consistent, for any $v\in S$ and $v\in e_i$, $i=1,2,\ldots,d$, 
we have
\[
\frac{w(e_1)}{B(v,e_1)}=\frac{w(e_2)}{B(v,e_2)}=\cdots
=\frac{w(e_d)}{B(v,e_d)}=rx_v^p.
\]
    
Conversely, assume that for some $S_i\subset V$, $G[S_i]$ is consistently 
$\alpha_i$-normal with weighted incidence matrix $B$ and weights $\{w(e)\}$. 
Define a vector $\bm{x}=(x_1,x_2,\ldots,x_n)^{\mathrm{T}}\in\mathbb{S}_{p,+}^{n-1}$ 
for $G$ as follows:
\[
  x_v=
  \begin{dcases}
    \bigg(\frac{w(e)}{rB(v,e)}\bigg)^{1/p}, & \text{if}\ v\in e\in E(G[S_i]), \\
    0, & \text{otherwise}.
  \end{dcases}
\]
The consistent condition guarantees that $x_v$ (for $v\in S$) is independent 
of the choice of the edge $e$. Clearly, $||\bm{x}||_p=1$. Hence, 
\begin{align*}
	\lambda^{(p)}(G)\geq P_G(\bm{x}) & =r\sum_{e\in E(G[S_i])}\prod_{v\in e}x_v\\
	& =r^{1-r/p}\sum_{e\in E(G[S_i])}\frac{w(e)^{r/p}}{\prod_{v\in e}(B(v,e))^{1/p}}\\
	& =r^{1-r/p}\sum_{e\in E(G[S_i])}\frac{w(e)}{w(e)^{1-r/p}\prod_{v\in e}(B(v,e))^{1/p}}\\
	& =\frac{r^{1-r/p}}{\alpha_i^{1/p}}\sum_{e\in E(G[S_i])}w(e)\\
	& =\frac{r^{1-r/p}}{\alpha_i^{1/p}}.
\end{align*}
Combining with the first part of the theorem, we have
\[
\lambda^{(p)}(G))=r^{1-r/p} \max_i\{\alpha_i^{-1/p}\}.
\]
The proof is completed.
\end{proof}

\begin{example}
Consider the following graph $G$ with $6$ vertices and $7$ edges.
\begin{center}
  \begin{tikzpicture}[scale=1.5, vertex/.style={circle, draw=black, fill=white} ] 
    \node at (150:1) [vertex, scale=0.4] (v1) [fill=black, label=left:{$v_1$}] {};
    \node at (210:1) [vertex, scale=0.4] (v2) [fill=black, label=left:{$v_2$}] {};
    \node at (0,0) [vertex, scale=0.4] (v3) [fill=black, label=above:{$v_3$}] {};
    \node at (1,0) [vertex, scale=0.4] (v4) [fill=black,label=above:{$v_4$}] {};
    \node at (1.863,0.5) [vertex, scale=0.4] (v5) [fill=black,label=right:{$v_5$}] {};
    \node at (1.863,-0.5) [vertex, scale=0.4] (v6) [fill=black,label=right:{$v_6$}] {};
    \draw[thick] (v1)--(v2)--(v3)--(v4)--(v5)--(v6) (v1)--(v3) (v4)--(v6);
  \end{tikzpicture}
\end{center}

When $p=1$, $G[S]$ has a consistent $\alpha_i$-normal labeling if and only if $S$
forms a clique of size $2$ or $3$ in $G$. In particular, both $G[\{v_1,v_2,v_3\}]$ 
and $G[\{v_4,v_5,v_6\}]$ has a consistent $\frac{3}{4}$-normal labeling while 
$G[\{u,v\}]$ has a consistent $1$-normal labeling for each edge $uv$. We have
\[
\lambda^{(1)}(G)=2^{-1}\cdot \max\Big\{\frac{4}{3}, 1\Big\}=\frac{2}{3}.
\]
\end{example}

We can also define the $\alpha$-subnormal for $p\in [1,r)$ similar to the case for 
$p=r$ (see \cite[Definition 4]{LuMan2016:Small Spectral Radius}).

\begin{definition}\label{psubnormaldef}
For $1\leq p< r$, a hypergraph $G$ with $m$ edges is called {\em $\alpha$-subnormal} 
for $p$ if there exists a weighted incidence matrix $B$ satisfying
\begin{enumerate}
\item[$(1)$] $\displaystyle\sum_{e:\,v\in e}B(v,e)\leq 1$, for any $v\in V(G)$;
\item[$(2)$] $m^{r-p}\displaystyle\prod_{v\in e}B(v,e)\geq\alpha$, for any $e\in E(G)$.
\end{enumerate}
\end{definition}

\begin{theorem}\label{subt<r}
Let $G$ be an $r$-uniform hypergraph with $m$ edges. If $G$ is $\alpha$-subnormal for 
$p\in [1,r)$, then the $p$-spectral radius of $G$ satisfies
\[
\lambda^{(p)}(G)\leq \frac{(r/m)^{1-r/p}}{\alpha^{1/p}}.   
\]
\end{theorem}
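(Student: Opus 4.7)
The plan is to mimic the Hölder-then-AM-GM chain used in the proof of \autoref{lem:subnormal}, compensating for the fact that when $1\leq p<r$ the Hölder exponent $p/(p-r)$ is negative, so the original argument breaks at that step. The key observation is that \autoref{psubnormaldef} effectively bakes in the constant weight $w(e)=1/m$, since then $w(e)^{p-r}=m^{r-p}$ is exactly the constant factor appearing in condition (2). With such a constant weight, no Hölder is needed to pull it out of the sum over edges, and the $p>r$ argument reduces to one AM-GM step followed by a superadditivity inequality that replaces the second Hölder step.

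Concretely, I will take an arbitrary nonnegative vector $\bm{x}\in\mathbb{S}_{p,+}^{n-1}$ and bound $P_G(\bm{x})$ from above. From condition (2) of \autoref{psubnormaldef} I get $\prod_{v\in e}B(v,e)\geq\alpha m^{p-r}$, hence for every edge $e$,
\[
\prod_{v\in e}x_v=\frac{\big(\prod_{v\in e}B(v,e)x_v^p\big)^{1/p}}{\big(\prod_{v\in e}B(v,e)\big)^{1/p}}\leq\alpha^{-1/p}m^{(r-p)/p}\Big(\prod_{v\in e}B(v,e)x_v^p\Big)^{1/p}.
\]
Applying AM-GM to the $r$ numbers $\{B(v,e)x_v^p\}_{v\in e}$ yields $\prod_{v\in e}B(v,e)x_v^p\leq\big(\tfrac{1}{r}\sum_{v\in e}B(v,e)x_v^p\big)^{r}$, so after summing over $e$ and taking $p$-th roots the remaining task is to control $\sum_{e\in E(G)}\big(\tfrac{1}{r}\sum_{v\in e}B(v,e)x_v^p\big)^{r/p}$.

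The decisive point, and the place where the $p<r$ analysis genuinely diverges from the $p>r$ one, is that now $r/p\geq 1$, and the elementary inequality $\sum_{e}t_e^{s}\leq(\sum_{e}t_e)^{s}$ (valid for $t_e\geq 0$ and $s\geq 1$) lets me pull the outer sum inside the power $r/p$. Switching the order of summation and using condition (1) together with $\|\bm{x}\|_p=1$, I obtain $\sum_e\sum_{v\in e}B(v,e)x_v^p=\sum_v x_v^p\sum_{e\ni v}B(v,e)\leq\sum_v x_v^p=1$. Assembling the chain gives
\[
P_G(\bm{x})\leq r\alpha^{-1/p}m^{(r-p)/p}\cdot r^{-r/p}=r^{1-r/p}m^{(r-p)/p}\alpha^{-1/p}=(r/m)^{1-r/p}\alpha^{-1/p},
\]
and taking the supremum over $\bm{x}\in\mathbb{S}_{p,+}^{n-1}$ finishes the proof. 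The main thing I need to check carefully is that the substitute inequality $\sum_e t_e^{r/p}\leq(\sum_e t_e)^{r/p}$ really points in the right direction (it does, since $r/p\geq 1$), and that the exponents recombine as $r^{1-r/p}m^{(r-p)/p}=(r/m)^{1-r/p}$; both are routine but are precisely the places where the Hölder step from \autoref{lem:subnormal} would have had to be reversed.
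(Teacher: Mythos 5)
Your proposal is correct and follows essentially the same route as the paper: the same use of condition (2) to extract the factor $m^{(r-p)/p}\alpha^{-1/p}$, the AM--GM step on $\{B(v,e)x_v^p\}_{v\in e}$, the superadditivity inequality $\sum_e t_e^{r/p}\leq\big(\sum_e t_e\big)^{r/p}$ for $r/p\geq 1$, and condition (1) with $\|\bm{x}\|_p=1$ to finish; the only difference is the (immaterial) order in which AM--GM and superadditivity are applied.
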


\begin{proof}
For any nonnegative vector $\bm{x}=(x_1,x_2,\ldots,x_n)^{\mathrm{T}}\in\mathbb{S}^{n-1}_{p,+}$, 
we have
\begin{align*}
r\sum_{\{i_1,\ldots,i_r\}\in E(G)}x_{i_1}\cdots x_{i_r} & \leq m^{r/p-1} 
\frac{r}{\alpha^{1/p}}\sum_{e\in E(G)}\prod_{v\in e}\big(B(v,e)\big)^{1/p}x_v\\
& \leq m^{r/p-1}\frac{r}{\alpha^{1/p}}
\Bigg(\sum_{e\in E(G)}\prod_{v\in e}\big(B(v,e)\big)^{1/r}x_v^{p/r}\Bigg)^{r/p}\\
& \leq m^{r/p-1}\frac{r^{1-r/p}}{\alpha^{1/p}}\Bigg(\sum_{e\in E(G)}\sum_{v\in e}B(v,e)x_v^p\Bigg)^{r/p}\\
& \leq \frac{(r/m)^{1-r/p}}{\alpha^{1/p}},
\end{align*}
which yields $\lambda^{(p)}(G)\leq (r/m)^{1-r/p}\alpha^{-1/p}$. 
\end{proof}

From Definition \ref{psubnormaldef}, a hypergraph $G$ (with $m$ edges) is $\alpha$-subnormal 
for $p\in [1,r)$ if and only if $G$ is $\alpha'$-subnormal for any $p'\in [1,r)$ with
$\alpha'=\alpha m^{p-p'}$. We have the following corollary.

\begin{corollary}
Let $G$ be an $r$-uniform hypergraph with $m$ edges. If $G$ is $\alpha$-subnormal for 
$1\leq p<r$, then for any $p'\in [1,r)$ the $p'$-spectral radius of $G$ satisfies
\[
\lambda^{(p')}(G)\leq \frac{r^{1-r/p'}}{\alpha^{1/p'} m^{(p-r)/p'}}.   
\] 
\end{corollary}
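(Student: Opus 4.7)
The plan is to use the remark immediately preceding the corollary, which observes that the notion of $\alpha$-subnormal is essentially ``$p$-free'' in the sense that by absorbing the factor $m^{r-p}$ into the bound, the same weighted incidence matrix $B$ witnesses subnormality for every $p' \in [1,r)$ after an appropriate rescaling of $\alpha$.

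First I would unpack Definition \ref{psubnormaldef}. Condition (1), namely $\sum_{e:\,v\in e}B(v,e)\leq 1$, does not involve $p$ at all, so it is preserved if we change $p$ to $p'$. Condition (2) is $m^{r-p}\prod_{v\in e}B(v,e)\geq\alpha$, which is equivalent to $\prod_{v\in e}B(v,e)\geq \alpha m^{p-r}$. For this same $B$ to witness $\alpha'$-subnormality at $p'$, we need $\prod_{v\in e}B(v,e)\geq \alpha' m^{p'-r}$, i.e., $\alpha' m^{p'-r}=\alpha m^{p-r}$, which forces $\alpha'=\alpha m^{p-p'}$. This justifies the ``iff'' statement in the paragraph preceding the corollary.

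Next I would invoke \autoref{subt<r} with parameter $p'$ and subnormal constant $\alpha'=\alpha m^{p-p'}$, giving
\[
\lambda^{(p')}(G)\leq \frac{(r/m)^{1-r/p'}}{(\alpha')^{1/p'}} = \frac{(r/m)^{1-r/p'}}{\alpha^{1/p'} m^{(p-p')/p'}}.
\]
Then I would simplify the exponent of $m$ in the denominator: combining $m^{-(1-r/p')}$ with $m^{(p-p')/p'}$ gives an $m$-exponent in the denominator equal to
\[
\bigl(1-\tfrac{r}{p'}\bigr) + \tfrac{p-p'}{p'} = \tfrac{p'-r}{p'} + \tfrac{p-p'}{p'} = \tfrac{p-r}{p'}.
\]
This collapses the bound to $r^{1-r/p'}/(\alpha^{1/p'} m^{(p-r)/p'})$, which is exactly the claimed inequality.

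The routine arithmetic is the only substance; there is no real obstacle, since the corollary is essentially a reparametrization of \autoref{subt<r}. The only thing worth being careful about is the sign convention: when $p<r$, the exponent $(p-r)/p'$ is negative, so $m^{(p-r)/p'}$ in the denominator actually \emph{weakens} the bound as $m$ grows, which is consistent with the subnormal inequality being sharper at the value of $p$ for which the labeling was originally constructed.
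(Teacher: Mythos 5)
Your proof is correct and is essentially the paper's own argument: the paper proves the corollary implicitly by the same observation that the definition of $\alpha$-subnormality for $p$ is equivalent to $\alpha m^{p-p'}$-subnormality for $p'$, followed by an application of \autoref{subt<r}; your arithmetic simplification of the $m$-exponent, $(1-r/p') + (p-p')/p' = (p-r)/p'$, is exactly the computation the paper leaves to the reader.
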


For $1\leq p<r$, denote $\mathcal{G}^r(p)$ the set of $r$-uniform hypergraph $G$ for which
$P_G(\bm{x})$ reaches the maximum at some point in $\bm{x}\in\mathbb{S}_{p,++}^{|V(G)|-1}$.
Then for any $r$-uniform hypergraph $G$, there exists a set $S\subset V$ such that
$G[S]\in \mathcal{G}^r(p)$. Finally, we conclude this section with a problem of Nikiforov 
\cite[Problem 5.9]{Nikiforov2014:Analytic Methods}, which are related to the topic of this 
section.

\begin{problem}
Given $1\leq p<r$, characterize all $r$-uniform hypergraphs in $\mathcal{G}^r(p)$.  
\end{problem}

\end{document}